\newtheorem{thm}{Theorem}[section]
\newtheorem{cor}[thm]{Corollary}
\newtheorem{lem}[thm]{Lemma}
\newtheorem{prop}[thm]{Proposition}
\theoremstyle{definition}
\newtheorem{defn}[thm]{Definition}
\newtheorem{exam}[thm]{Example} 
\newtheorem{rem}[thm]{Remark}
\DeclareMathOperator{\N}{\mathbb {N}}
\DeclareMathOperator{\Z}{\mathbb {Z}}
\DeclareMathOperator{\depth}{depth}
\DeclareMathOperator{\ch}{char}
\DeclareMathOperator{\lk}{lk}
\DeclareMathOperator{\reg}{reg}
\DeclareMathOperator{\pd}{pd}
\DeclareMathOperator{\dst}{dstab}
\DeclareMathOperator{\sdst}{sdstab}
\DeclareMathOperator{\length}{length}
\def\alb {\boldsymbol {\alpha}}
\def\btb {\boldsymbol {\beta}}
\def\zv {\mathbf 0}
\def\x {\mathbf x}
\def\mi {\mathfrak m}
\def\h {\widetilde{H}}
\def\F {\mathcal{F}}
\begin{document}

\title[Stability index of depth function]{Depth stability of cover ideals}

\author[M.P. Binh]{Mai Phuoc Binh}
\address{University of Transport and Communications , No.3 Cau Giay Street, Lang Thuong ward, Dong Da District, Hanoi, Vietnam}
\email{binhmp@utc.edu.vn}

\author[N.T. Hang]{Nguyen Thu Hang}
\address{International Centre of Research and Postgraduate Training in Mathematics, 18B Hoang Quoc Viet Street, Ha Noi, Vietnam}
\address{Thai Nguyen University of Sciences, Tan Thinh Ward, Thai Nguyen City, Thai Nguyen, Vietnam}
\email{hangnt@tnus.edu.vn}

\author[T.T. Hien]{Truong Thi Hien}
\address{Hong Duc University, 565 Quang Trung Street, Dong Ve Ward, Thanh Hoa, Viet Nam}
\email{hientruong86@gmail.com}

\author[T.N. Trung]{ Tran Nam Trung}
\address{Institute of Mathematics, VAST, 18 Hoang Quoc Viet, Hanoi, Viet Nam, and Institute of Mathematics and TIMAS, Thang Long University, Ha Noi, Vietnam.}
\email{tntrung@math.ac.vn}

\subjclass{13A15, 13C15, 05C90, 13D45.}
\keywords{Ordered matching, Cover ideal, Depth function, Symbolic power}

\begin{abstract} Let $R=K[x_1,\ldots,x_r]$ be a polynomial ring over a field $K$. Let $G$ be a graph with vertex set $\{1,\ldots,r\}$ and let $J$ be the cover ideal of $G$. We give a sharp bound for the stability index of symbolic depth function $\sdst(J)$. In the case $G$ is bipartite, it yields a sharp bound for the stability index of depth function $\dst(J)$ and this bound is exact if $G$ is a forest.
\end{abstract}

\maketitle
\section*{Introduction}
Let $R = K[x_1,\ldots,x_r]$ be a polynomial ring over a field $K$ and let $I$ be a homogeneous ideal of  $R$. We call the functions $\depth R/I^n$ and $\depth R/I^{(n)}$, for $n\geqslant 1$, the {\it depth function} and the {\it symbolic depth function} of  $I$, respectively.

It is a classical result of Brodmann \cite{B} that the depth function of an ideal in a Noetherian ring is asymptotically a constant function. The same result is not true for the symbolic depth function (see \cite[Theorem 4.4]{NT}), but it is true for square-free monomial ideals (see \cite{HKTT, HT2}).

By virtue of these results we define the {\it stability index of depth function} of $I$ to be
$$\dst(I) = \min\{n_0\mid \depth R/I^n =\lim\limits_{k\to \infty} \depth R/I^k, \text{ for all } n\geqslant n_0\}$$ 
and the {\it stability index of symbolic depth function} of $I$ to be
$$\sdst(I) = \min\{n_0\mid \depth R/I^{(n)} =\lim\limits_{k\to \infty} \depth R/I^{(k)}, \text{ for all } n\geqslant n_0\}$$
provided the second limit exists. 

It is of great interest to bound effectively $\dst(I)$ and $\sdst(I)$ for square-free monomial ideals. However these problems seem to be very difficult and are currently solved for only few cases (see e.g. \cite{HHTT,HT,  HHbi, HHT2, HRV, HV, HKTT, MT2, Tr2}).

In our paper,  we investigate these indices on the class of square-free monomial ideals of height $2$, these ideals can  be defined as cover ideals of graphs.  Let $G=(V,E)$ be a graph with vertex set $V=\{1,\ldots,r\}$. For a subset $\tau$ of $V$, denote the square-free monomial $\x_{\tau}$ to be the product of all variables $x_i$ where $i\in\tau$. Then, the {\it cover ideal} of $G$ is defined by:
$$J(G) = (\x_{\tau} \mid \tau \text{ is a minimal vertex cover of } G).$$

In order to find a good bounds for $\dst(J(G))$ and $\sdst(J(G))$ we use the theory of matchings in graphs. We mainly deal with {\it ordered matchings} which are defined as follows.  A matching $M=\{\{u_i,v_i\} \mid i=1,\ldots,s\}$ in a graph $G$ is called an ordered matching if:
\begin{enumerate}
\item $\{u_1,\ldots,u_s\}$ is an independent set in $G$,
\item $\{u_i, v_j\} \in E$ implies $i \leqslant j$.
\end{enumerate}
In this case, the set $A=\{u_1,\ldots,u_s\}$ is called	 the {\it free parameter set } of  $G$ and $B=\{v_1,\ldots,v_s\}$ is called the {\it partner set} of $A$.

\medskip

For an ordered matching $M$ in a graph $G$, an $M$-{\it alternating path} is a path whose edges are alternately in $M$ and $E\setminus M$. This notion plays an important role in the theory of matchings (see e.g. \cite{BM}). For an ordered matching $M$ in $G$, let $\ell(M)$ be the length of a longest $M$-alternating path, and
$$\ell(G) = \min\{\ell(M) \mid M \text{ is a maximum ordered matching in } G\}.$$
Although an $M$-alternating path may be not simple but these numbers are always finite (see Lemmas  \ref{lem-ell} and  \ref{up-nu}).

\medskip

Then the  main result of our paper is the following theorem.

\medskip

\noindent{\bf Theorem 1.} (see Theorem \ref{non-bipartite-theorem}) 
{$\sdst(J(G)) \leqslant (\ell(G)+1)/2$ for any graph $G$.}

\medskip

Note that the bound in  Theorem 1 improves the bound given in \cite[Theorem 3.6]{HT} for all graphs, and  the one in \cite[Theorem 3.4]{HKTT} for bipartite graphs. 

\medskip

Moreover, the equality holds for a broad class of graphs. Let $\nu(G)$ and $\nu_0(G)$ denote the matching and the ordered matching numbers of $G$, respectively. Then,  we have the following theorem  (see Propositions \ref{PM}, \ref{EPM} and \ref{main-cor}).

\medskip

\noindent{\bf Theorem 2.} {$\sdst(J(G)) = (\ell(G)+1)/2$ if $G$ is one of the following graphs:
\begin{enumerate}
\item $G$ has a perfect ordered matching, i.e. $|V(G)| = 2\nu_0(G)$, or
\item $\nu(G) = \nu_0(G)$ and $G$ contains no cycles of length $5$, or
\item $G$ is a forest.
\end{enumerate}
}

\medskip

It is worth mentioning that $\dst(J(G))$ and $\sdst(J(G))$ depend on the characteristic of the base filed $K$ (see Example \ref{char-depend}). This means that we can not give a combinatorial formula for these indices as in Theorem 2.

\section{Preliminary}

In this section, we recollect notation, terminology and basic results used in the paper. We follow standard texts \cite{BM, BH, MS}. Throughout the paper, let $K$ be a field, let $R = K[x_1,\ldots,x_r],\  r\geqslant 1$ be a polynomial ring, and let $\mi = (x_1,\ldots,x_r)$ be the maximal homogeneous ideal of $R$.

\subsection{Depth and regularity} The object of our work is the depth of graded modules and ideals over $R$. This invariant can be defined via either the minimal free resolutions or the local cohomology modules. 

Let $M$ be a nonzero finitely generated graded  $R$-module and let
$$0 \rightarrow \bigoplus_{j\in\Z} R(-j)^{\beta_{p,j}(M)} \rightarrow \cdots \rightarrow \bigoplus_{j\in\Z}R(-j)^{\beta_{0,j}(M)}\rightarrow 0$$
be the minimal free resolution of $M$. The {\it projective dimension} of $M$ is the length of this resolution
$$\pd(M) = p,$$
and the {\it depth} of $M$ is given by Auslander-Buchsbaum formula
$$\depth(M) =r- p.$$ 
Another invariant measures the complexity of the resolution is the \emph{Castelnuovo–Mumford regularity} (or regularity for short) of $M$ which is defined by
$$\reg(M) = \max\{j-i\mid \beta_{i,j}(M)\ne 0\}.$$

The depth and regularity of $M$ can also be computed via the local cohomology modules of $M$. Let $H_{\mi}^i(M)$ be the $i$-th cohomology module of $M$ with support in $\mi$. Then,
$$\depth(M) = \min\{i\mid H_{\mi}^i(M) \ne 0\},$$
and
$$\reg(M) = \max\{j + i\mid H_{\mi}^i(M)_j \ne \zv, \text{ for } i = 0,\ldots, \dim(M), \text{ and } j\in \Z\}.$$

\subsection{Graphs}

Let $G$ be a simple graph. We use the symbols $V(G)$ and $E(G)$ to denote the vertex set and the edge set of $G$, respectively. In this paper we always assume that $E(G)\ne \emptyset$ unless otherwise indicated.

 An edge $e$ is incident to a vertex $v$ if $v\in e$. The {\it degree} of a vertex is the number of edges incident to $u$, $u$ is a leaf if it has degree one. If  $e =\{u,v\}$, then  $u$ and $v$ are  ends of $e$ or endpoints of $e$, $u$ and $v$ are adjacent to  each other. If there is no confusion, we simply write $e = uv$. 

A graph $H$ is called a {\it subgraph} of $G$ if $V(H)\subseteq V(G)$ and $E(H) \subseteq E(G)$.  A graph $H$ is called an {\it induced subgraph} of $G$ if the vertices of $H$ are vertices in $G$, and for vertices $u$ and $v$ in $V(H)$, $\{u,v\}$ is an edge of $H$ if and only if $\{u,v\}$ is an edge in $G$. The induced subgraph of $G$ on a subset $S \subseteq V(G)$, denoted by $G[S]$, is obtained by deleting vertices not in $S$ from $G$ (and their incident edges).

For a subset $M\subseteq E(G)$, denote $V(M)$ to be all vertices of $G$ that are endpoins of edges in $M$ and denote $G[M]$ to be $G[V(M)]$.
\medskip

Let $p\colon v_0,v_1,\ldots, v_k$ be a sequence of vertices of $G$. Then,
\begin{enumerate}
\item $p$ is called a {\it path} if $\{v_{i-1},v_i\}\in E(G)$ for $i=1, \ldots,k$. In this case, we say that $p$ is a path from $v_0$ to $v_k$.
\item $p$ is called a {\it simple path} if it is a path and every vertex appears exactly once.
\item $p$ is called a {\it cycle} if $k\geqslant 3$ and $p$ is a path with distinct vertices except for $v_0 = v_k$.
\end{enumerate}
In each case, $k$ is called the {\it length} of $p$. A simple path is longest if it is among the simple paths of largest length of $G$.

If $p\colon v_0,v_1,\ldots, v_k$ is the path from $v_0$ to $v_k$ we indicate it by $v_0 \to v_1 \to \cdots \to v_k$, and $v_0$ and $v_k$ are called the origin and the terminus of $p$, respectively.

A graph $G$ with $r$ vertices such that all edges lying on a simple path is called a path with $r$ vertices, denoted by $P_r$. Similarly, if all edges of $G$ lying on a cycle, then $G$ is called a cycle of $r$ vertices, denoted by $C_r$. As usual, the cycle $C_3$ is call a {\it triangle}, the cycle $C_4$ is called a {\it quadrilateral}, the cycle $C_5$ is called a {\it pentagon}, and so forth.

A  graph is {\it connected} if there is a path from any point to any other point in the graph. A graph that is not connected is said to be {\it disconnected}. A connected component of  a graph $G$ is a connected subgraph that is not part of any larger connected subgraph. The components of any graph partition its vertices into disjoint sets, and are the induced subgraphs of those sets.

The graph $G$ is {\it bipartite} if  $V(G)$ can be partitioned into two subsets $X$ and $Y$ such that every edge has one end in $X$ and another end in $Y$; such a partition $(X,Y)$ is called a {\it bipartition} of the graph. Note that $G$ is bipartite if and only if it has no cycle of odd length (see \cite[Theorem 4.7]{BM}).  A connected graph without cycles is a {\it tree}. Obviously, a tree is bipartite. A graph is a {\it forest} if  every its connected component  is a tree.

A {\it matching} in the graph $G$ is a set of pairwise non adjacent edges. If $M$ is a matching, the two ends of each edge of $M$ are said to be matched under $M$, and each vertex incident with an edge of $M$ is said to be covered by $M$. A {\it perfect matching} is one which covers every vertex of the graph, a maximum matching one which covers as many vertices as possible. The number of edges in a maximum matching in a graph $G$ is called the {\it matching number} of $G$ and denoted $\nu(G)$.

A matching $M$ of $G$ is called an {\it induced matching} if the graph $G[M]$ is just disjoint edges. The {\it induced matching number} of $G$, denoted by $\nu'(G)$, is the maximum size of an induced matching in $G$.

An {\it independent set} in $G$ is a set of vertices no two of which are adjacent to each other.  According to Constantinescu and Varbaro \cite{CV}, we define an ordered matching as follows.

\begin{defn}\label{ordered-matching} A matching $M=\{\{u_i,v_i\} \mid i=1,\ldots,s\}$ in a graph $G$ is called an {\it ordered matching} if:
\begin{enumerate}
\item $\{u_1,\ldots,u_s\}$ is an independent set in $G$,
\item $\{u_i, v_j\} \in E(G)$ implies $i \leqslant j$.
\end{enumerate}
In this case, the set $A=\{u_1,\ldots,u_s\}$ is called	 the {\it free parameter set } of  $G$ and $B=\{v_1,\ldots,v_s\}$ is called the {\it partner set} of $A$. For simplicity, we also say $A$ and $B$ are free parameter set and partner set for $M$.  

The {\it ordered matching number} of $G$, denoted by $\nu_0(G)$ is the maximum size of an ordered matching in $G$. 
\end{defn}

An ordered matching in $G$ is a perfect matching is called a {\it perfect ordered matching}. Thus, $G$ has a perfect order matching if  and only if $|V(G)| = 2\nu_0(G)$. 

For example, the graph $G$ depicted in Figure $1$ has a perfect ordered matching 
$$M =\{ \{1,5\}, \{2,6\}, \{3,7\}, \{4,8\}\}$$ 
which are bold edges in the figure. In this case, $A=\{1,2,3,4\}$ and $B = \{5,6,7,8\}$.

\begin{center}

\includegraphics[scale=0.5]{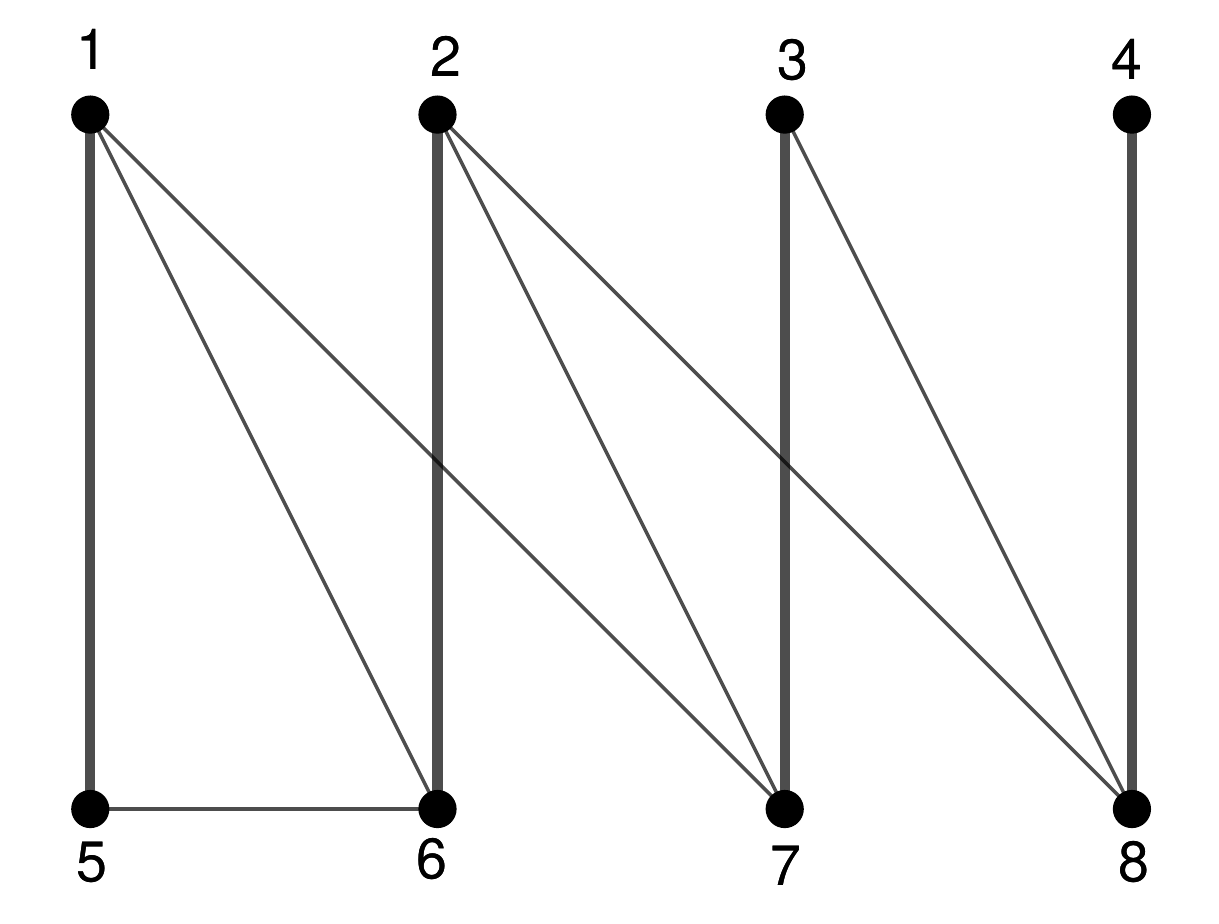}\\
\medskip

{\it Figure $1$. A graph with a perfect ordered matching}
\end{center}

\begin{lem}\label{unique-matching} If a graph $G$ has a perfect ordered matching, then it has a unique perfect matching.
\end{lem}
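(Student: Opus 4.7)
The plan is to induct on the size $s = \nu_0(G)$ of the perfect ordered matching $M = \{\{u_i,v_i\} \mid i=1,\ldots,s\}$, the key observation being that the ``last'' vertex $u_s$ has a uniquely determined partner. Since $M$ is perfect, $V(G) = \{u_1,\ldots,u_s\} \cup \{v_1,\ldots,v_s\}$, so every neighbor of $u_s$ in $G$ lies in $A \cup B$.

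First I would analyze $u_s$. By condition (1) of Definition \ref{ordered-matching}, $u_s$ is not adjacent to any vertex of $A = \{u_1,\ldots,u_s\}$. By condition (2), if $\{u_s,v_j\} \in E(G)$ then $s \leqslant j$, which forces $j = s$. Hence the only neighbor of $u_s$ in $G$ is $v_s$. Consequently, in \emph{any} perfect matching $M'$ of $G$, the edge incident to $u_s$ must be $\{u_s,v_s\}$.

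Next I would pass to the induced subgraph $G' = G[V(G) \setminus \{u_s,v_s\}]$. The set $M' = \{\{u_i,v_i\} \mid i=1,\ldots,s-1\}$ is again a perfect matching of $G'$, and it remains an ordered matching: condition (1) is preserved because independence is hereditary, and condition (2) is preserved because $E(G') \subseteq E(G)$ and we have simply truncated the index set. Thus $G'$ admits a perfect ordered matching of size $s-1$, so by the induction hypothesis $G'$ has a unique perfect matching, namely $M'$. Combined with the forced edge $\{u_s,v_s\}$, this shows the perfect matching of $G$ is unique, completing the induction. The base case $s=1$ is trivial since $G$ then consists of the single edge $\{u_1,v_1\}$.

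The argument is short and the only potential obstacle is verifying that the inductive structure is genuinely inherited by $G'$, which I addressed above; no sophisticated matching theory (such as $M$-alternating paths) is needed for this particular lemma.
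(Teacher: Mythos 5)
Your proof is correct and follows essentially the same route as the paper: both arguments induct on $s$, observe that $u_s$ is a leaf whose only neighbor is $v_s$ (forcing the edge $\{u_s,v_s\}$ into any perfect matching), and then delete $\{u_s,v_s\}$ and apply the induction hypothesis to the remaining perfect ordered matching. Your write-up is in fact slightly more detailed than the paper's, since you spell out why conditions (1) and (2) of the ordered-matching definition force $u_s$ to be a leaf.
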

\begin{proof} Let $M = \{a_ib_i \mid i=1,\ldots,s\}$ be a perfect ordered matching in $G$. Let $M'$ be a perfect matching in $G$. We will prove by induction on $s$  that $M' = M$ as subsets of $E(G)$. Indeed, if $s=1$, then $G$ is just the edge $a_1b_1$, and the lemma is obvious true.

Assume that $s \geqslant 2$. Since $M$ is a perfect ordered matching, we have $a_s$ is a leaf. Recall that $M'$ is a perfect matching, so $a_sb_s\in M'$.

Since $M\setminus \{a_sb_s\}$ and $M'\setminus \{a_sb_s\}$ are a perfect ordered matching and a perfect matching in $G\setminus \{a_s,b_s\}$, respectively, by the induction hypothesis we have $M\setminus \{a_sb_s\} = M'\setminus \{a_sb_s\}$. Thus, $M = M'$, as required.
\end{proof}

Let $M$ be a matching of $G$. An $M$-{\it alternating path} is a path whose edges are alternately in $M$ and $E\setminus M$. This notion plays an important role in the theory of matchings (see \cite{BM}). 

\begin{defn}\label{ord-len} Let $M$ be an ordered matching in $G$.  Let
$$\ell(M) = \max\{\length(p)\mid p \text{ is an $M$-alternating path}\},$$
and 
$$\ell(G) =\min\{\ell(M) \mid M \text{ is a maximum ordered matching of } G \}.$$
\end{defn}

\begin{exam} Let $G$ be a graph depicted in Figure $1$.  Then, $G$ has a perfect ordered matching $M=\{\{1,5\},\{2,6\}, \{3,7\},\{4,8\}\}$. We can see that 
$$4 \to 8 \to 3 \to 7 \to 2 \to 6 \to 1 \to 5 \to 6\to 2 \to 7\to 3 \to 8 \to 4,$$
is an $M$-alternating path. Note that it is not simple. 

From the Figure 1, we can see that it is a longest $M$-alternating path, so $\ell(M)=13$. Now we compute $\ell(G)$. Let $M'$ be another perfect ordered matching of $G$. Then, $M = M'$ as subsets of $E(G)$ by Lemma \ref{unique-matching}, and then $\ell(M') =\ell(M)$. Therefore, $\ell(G) =\ell(M) = 13$.
\end{exam}

The argument in the last paragraph in  this example  yields the following useful fact.

\begin{rem}\label{perfect-ell} If a graph $G$ has a perfect ordered matching, then $\ell(G) = \ell(M)$ for every perfect ordered matching $M$.
\end{rem}

In this paper we introduce the following type of alternating paths.

\begin{defn} \label{admissible-path} Let $M$ be an ordered matching in $G$ with free parameter and partner sets $A$ and $B$, respectively. For a vertex $v$ covered by $M$, we called an $M$-alternating path $p$ is an {\it $M$-admissible path for $v$} if
\begin{enumerate}
\item $v$ is the origin of $p$,
\item the first and the last edges are in $M$,
\item every edge of $p$ has an end in $A$ and another end in $B$. 
\end{enumerate}
\end{defn}

We next show that any path in the definition above  is simple.

\begin{lem} \label{simple-path}Let $M$ be an ordered matching in a graph $G$ with free parameter set $A$ and partner set $B$.  Assume that $B$ is an independent set of $G$. Then, every $M$-alternating path $p$ with the first edge and the last edge in $M$ is simple.
\end{lem}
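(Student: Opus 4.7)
The plan is to exploit the bipartite‐like structure induced by the ordered matching on the vertices covered by $M$. Since $p$ begins and ends with edges in $M$, its length is $2m+1$ for some $m\geqslant 0$; write $p\colon v_0\to v_1\to\cdots\to v_{2m+1}$, where $\{v_{2i},v_{2i+1}\}\in M$ for $i=0,\ldots,m$ and $\{v_{2i-1},v_{2i}\}\in E\setminus M$ for $i=1,\ldots,m$. Every vertex of $p$ is then an endpoint of an $M$-edge, hence belongs to $A\cup B$.

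Next I would argue that the path alternates strictly between $A$ and $B$. Each edge of $M$ joins a vertex of $A$ to a vertex of $B$ by definition. For a non-$M$ edge $\{v_{2i-1},v_{2i}\}$, both endpoints lie in $A\cup B$, but the edge cannot have both ends in $A$ (condition (1) of Definition~\ref{ordered-matching}) nor both in $B$ (by hypothesis). So every edge of $p$ goes between $A$ and $B$, and therefore (up to renaming) $v_{2i}\in A$ and $v_{2i+1}\in B$ for all $i$, or the reverse.

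Assuming the first case, write $M=\{u_jv_j\mid j=1,\ldots,s\}$ with $u_j\in A$, $v_j\in B$, and let $k_i$ be the unique index such that $\{v_{2i},v_{2i+1}\}=\{u_{k_i},v_{k_i}\}$. The non-$M$ edge $\{v_{2i+1},v_{2i+2}\}=\{v_{k_i},u_{k_{i+1}}\}$ lies in $E(G)$, so condition~(2) of Definition~\ref{ordered-matching} forces $k_{i+1}\leqslant k_i$. The inequality must be strict, since $k_{i+1}=k_i$ would make $\{v_{k_i},u_{k_{i+1}}\}$ equal to the $M$-edge $\{u_{k_i},v_{k_i}\}$, contradicting that this edge is not in $M$. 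Hence $k_0>k_1>\cdots>k_m$, so the indices are pairwise distinct; consequently the even-indexed vertices $u_{k_0},\ldots,u_{k_m}$ are pairwise distinct, and likewise for the odd-indexed vertices $v_{k_0},\ldots,v_{k_m}$. Therefore $p$ is simple. The mirror case, $v_0\in B$, is entirely symmetric and produces a strictly increasing sequence $k_0<k_1<\cdots<k_m$.

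There is no serious obstacle here; the only point requiring care is the observation that both $A$ and $B$ must be independent in order to conclude that every edge of $p$ (including the non-$M$ edges) crosses between $A$ and $B$. This is precisely where the extra hypothesis on $B$ enters, and it is what prevents the kind of repetition exhibited by the alternating path in the example following Remark~\ref{perfect-ell}.
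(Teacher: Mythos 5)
Your proof is correct and follows essentially the same route as the paper's: both arguments use independence of $A$ (from the definition) and of $B$ (from the hypothesis) to show every edge of $p$ crosses between $A$ and $B$, and then use condition (2) of Definition~\ref{ordered-matching} together with the fact that the non-$M$ edges are not in $M$ to conclude that the sequence of matching-edge indices along $p$ is strictly monotone, hence the vertices are pairwise distinct.
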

\begin{proof}  By the assumption, the length of $p$ is odd. Assume the path $p$ is of the form:
$$u_1\to v_1\to u_2\to v_2\cdots \to u_{k-1}\to v_{k-1}\to u_k \to v_k,$$
so that $\{u_1v_1, \ldots, u_kv_k \} \subseteq M$. Let $M = \{e_1,\ldots,e_s\}$. Then, for $i=1,\ldots, k$, there is $\sigma(i) \in \{1,\ldots, s\}$ such that $u_iv_i = e_{\sigma(i)}$. Hence, in order to prove the lemma it suffices to show that the sequence $\{\sigma(i)\}_{i=1}^k$ is strictly monotone.  To prove this, we consider two cases:

{\it Case 1}:  $u_1\in B$. Since $A$ and $B$ are independent sets of $G$, we deduce that every edge has an end in $A$ and another end in $B$.  This fact implies that  $u_1,\ldots, u_k\in B$ and $v_1,\ldots,v_k \in A$. 

We now prove the sequence is increasing. Fix an index $i$, we need to show that $\sigma(i) < \sigma(i+1)$. Indeed, since $v_i\in A$,  $u_{i+1}\in B$ and $v_i u_{i+1}\in E(G)\setminus M$, by the definition of ordered matching we deduce that $\sigma(i) < \sigma(i+1)$.

{\it Case 2}: $u_1\in A$. In this case, $u_1,\ldots, u_k\in A$ and $v_1,\ldots,v_k \in B$. By the similar argument as in the previous case we can prove that the sequence is decreasing, and the lemma follows.
\end{proof}

\begin{lem} \label{simple-admissible} Let $M$ be an ordered matching in a graph $G$ and let $p$ be an $M$-admissible for a vertex $v\in V(M)$. Then,
\begin{enumerate}
\item $p$ is simple.
\item $\length(p)$ is odd.
\end{enumerate}
\end{lem}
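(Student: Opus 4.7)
The proof splits naturally into two parts, and part (2) is immediate. Since $p$ is $M$-alternating with both its first and its last edge in $M$, the edges of $p$, read from the origin, follow the pattern $M, E\setminus M, M, E\setminus M, \dots, M$, so the number of edges, which is $\length(p)$, must be odd.

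For part (1), my plan is to reduce to Lemma \ref{simple-path}. That lemma requires the partner set $B$ to be an independent set of the ambient graph, a hypothesis that is not directly available here, but which I can manufacture by passing to a suitable subgraph. Concretely, I would consider the bipartite subgraph $G'$ of $G$ on the vertex set $V(G)$ whose edges are exactly the edges of $G$ with one endpoint in $A$ and the other in $B$. Then $B$ is trivially an independent set of $G'$ (and $A$ is as well). Moreover, $M$ remains an ordered matching in $G'$ with the same free parameter set $A$ and partner set $B$: condition (1) of Definition \ref{ordered-matching} holds because $A$ is already independent in $G$, and condition (2) follows at once from $E(G')\subseteq E(G)$.

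Condition (3) of Definition \ref{admissible-path} forces every edge of $p$ to belong to $E(G')$, so $p$ is a path in $G'$; since $M\subseteq E(G')$ and non-matching edges of $p$ also lie in $E(G')$, $p$ remains an $M$-alternating path in $G'$ with first and last edges in $M$. Applying Lemma \ref{simple-path} in $G'$ then yields that $p$ is simple. The only subtlety I foresee is the routine verification that the ordered-matching property descends from $G$ to the subgraph $G'$, but this is immediate from the two defining conditions.
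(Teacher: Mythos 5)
Your proof is correct and follows essentially the same route as the paper: the paper also passes to the bipartite subgraph $G(A,B)$ consisting of the edges of $G$ with one end in $A$ and one in $B$, notes that condition (3) of Definition \ref{admissible-path} places $p$ inside this subgraph, and invokes Lemma \ref{simple-path} there; part (2) is handled by the same parity count of alternating edges. No gaps.
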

\begin{proof} Let $A$ and $B$ be the free parameter and the partner sets for $M$, respectively. Let $G(A,B)$  be the bipartite graph where the vertex set is $A \cup B$, and the edges of $G(A,B)$ are edges of $G$ with an end in $A$ and another end in $B$. Thus, the couple $(A,B)$ is a bipartition of $G(A,B)$. 

Then, for a path $p$ we have each edge of $p$ has an end in $A$ and an end in $B$ if and only if $p$ is a path in $G(A,B)$. Thus,  (1) deduced from Lemma \ref{simple-path}.

We now prove $(2)$. Since $p$ is an $M$-alternating path with the first and the last edges in $M$. It follows that $\length(p) = 2k-1$ where $k$ is the number of edges in $M$ lying in $p$. This proves the lemma.
\end{proof}

\begin{lem}\label{lem-ell-1} Let $M$ be an ordered matching in a graph $G$ with the free parameter set $A$ and the partner set $B$. Then, every $M$-alternating path with the first and the last edges in $M$ is either an $M$-admissible path of a vertex in $B$ or a join of two $M$-admissible paths of two vertices $u$ and $v$ in $B$ via the edge $uv$. 
\end{lem}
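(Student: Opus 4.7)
The approach I would take is to classify, for each $M$-edge appearing along the given $M$-alternating path $p$, which endpoint lies in $A$ and which in $B$. Write $p\colon w_0 \to w_1 \to \cdots \to w_{2k-1}$ with $w_{2i}w_{2i+1} \in M$ for $0 \leqslant i \leqslant k-1$; since each $M$-edge has exactly one end in $A$ and one in $B$, I label the $i$-th $M$-edge of type $(A,B)$ if $w_{2i}\in A$ and $w_{2i+1}\in B$, and of type $(B,A)$ otherwise. The key step is then to study how the type can change between position $i$ and position $i+1$, where the two $M$-edges are linked by the non-$M$ edge $w_{2i+1}w_{2i+2}$.

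A four-case inspection, using only that $A$ is independent and that every $M$-edge goes between $A$ and $B$, should show: the transition $(B,A)\to(A,B)$ would require an edge with both endpoints in $A$ and is impossible; the two constant transitions $(A,B)\to(A,B)$ and $(B,A)\to(B,A)$ leave the connecting edge of $A$-$B$ type; and the transition $(A,B)\to(B,A)$ forces the connecting edge to have both endpoints in $B$. Hence the type sequence along $p$ is either constant, or changes exactly once, and only from $(A,B)$ to $(B,A)$.

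To finish, I split into cases. If the type is constantly $(B,A)$, then $w_0\in B$ and every edge of $p$ has one end in $A$ and one in $B$, so $p$ itself is $M$-admissible for $w_0\in B$; the constant $(A,B)$ case is handled by reversing $p$, whose new origin $w_{2k-1}$ then lies in $B$. If the type jumps between positions $j$ and $j+1$, set $u = w_{2j+1}$ and $v = w_{2j+2}$; both lie in $B$ by the analysis above, and $uv$ is an edge of $G$ belonging to $E(G)\setminus M$. Splitting $p$ at this edge, the reversed initial portion $w_{2j+1}\to w_{2j}\to\cdots\to w_0$ and the terminal portion $w_{2j+2}\to\cdots\to w_{2k-1}$ are each $M$-alternating paths beginning and ending with $M$-edges and containing only $A$-$B$ edges, hence $M$-admissible for their origins $u,v\in B$; joining them via $uv$ reconstructs $p$.

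I do not foresee any serious obstacle. The main point to get right is orientation: the definition of an admissible path places its distinguished vertex at the origin, so at several points in the argument one must freely reverse $p$, or one of the two sub-paths obtained by cutting at $uv$, in order to place its origin in $B$.
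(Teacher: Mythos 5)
Your proposal is correct and follows essentially the same route as the paper's proof: both arguments track, for each $M$-edge along the path, which endpoint lies in $A$ and which in $B$, use the independence of $A$ to show this orientation can flip at most once (and only via a non-$M$ edge with both ends in $B$), and then either read off admissibility directly or split the path at that unique $B$--$B$ edge. Your explicit four-case transition analysis and the careful attention to reversing sub-paths so their origins land in $B$ are just a more systematic rendering of the paper's ``choose the smallest $i$ with $v_iu_{i+1}\in E(G[B])$'' step.
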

\begin{proof} Let $p$ be an $M$-alternating path with the first and the last edges in $M$. Then, $\length(p)$ is odd so we may assume that $\length(p) = 2k-1$ and $$p\colon u_1\to v_1\to u_2\to v_2\to \cdots \to u_k\to v_k$$ where $u_iv_i \in M$ for $i=1,\ldots, k$. We consider two cases:

\medskip

{\it Case 1}: $u_1 \in B$. Since $A$ is an independent set of $G$, we deduce that
$$u_1,\ldots, u_k\in B \text{ and } v_1,\ldots,v_k \in A.$$
This means that $p$ is an $M$-admissible path for $u_1$. 

\medskip

{\it Case 2}: $u_1 \in A$. If $p$ is an $M$-admissible path for $u_1$, then $p$ has the desired form. Now assume that $p$ is not an $M$-admissible path for $u_1$. Then, $v_iu_{i+1} \in E(G[B])$ for some $i\in\{1,\ldots,k-1\}$. Let $i$ be the smallest such integers. Let $p_1$ be the first part of $p$ from $u_1$ to $v_i$, and $p_2$ the second part of $p$ from $u_{i+1}$ to the terminus of $p$. Then, $p_2$ is an $M$-admissible path for $v_{i+1}$ by the argument in Case 1 above. Note that $p_1$ is an $M$-admissible path for $v_i$ if we reverse its direction. Thus, $p$ is a join of $p_1$ and $p_2$ via the edge $v_iu_{i+1}$, and  the lemma follows.
\end{proof}

In order to compute $\ell(M)$ we introduce the following notations.

\begin{defn}\label{ell-0} Let $M$ be an ordered matching in a graph $G$ with free parameter set $A$ and partner set $B$. For every vertex $v$ covered by $M$, let $\ell(v)$ be the length of a longest $M$-admissible path of $v$. Let
$$\ell_0(M) = \max\{\ell(v)\mid v\in B\}, \ \ell_1(M) =  \max\{\ell(u)+\ell(v)+1 \mid uv\in E(G[B])\},$$
where we make a convention that $\ell_1(M) = 0$ if $B$ is an independent set of $G$.
\end{defn}

\begin{lem}\label{lem-ell} Let $M$ be an ordered matching in a graph $G$. Then, $$\ell(M) =\max\{\ell_0(M), \ell_1(M)\}.$$
\end{lem}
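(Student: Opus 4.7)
The plan is to establish the equality by proving the two inequalities separately, using Lemma~\ref{lem-ell-1} as the key structural input together with the basic observation that every $M$-edge has one endpoint in $A$ and the other in $B$.

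For the lower bound $\ell(M) \geq \max\{\ell_0(M), \ell_1(M)\}$, I would first note that each $M$-admissible path is by definition an $M$-alternating path whose first and last edges lie in $M$, so $\ell(v) \leq \ell(M)$ for every $v \in B$, giving $\ell_0(M) \leq \ell(M)$. For the $\ell_1(M)$ part, pick an edge $uv \in E(G[B])$ together with longest $M$-admissible paths $p_u$ and $p_v$ based at $u$ and $v$, and form the concatenation of the reverse of $p_u$, the edge $uv$, and $p_v$. The alternation check is the crucial step: the reverse of $p_u$ ends at $u$ with its original first edge, which lies in $M$; the edge $uv$ is not in $M$ because $u, v \in B$ while each edge of $M$ straddles $A$ and $B$; and $p_v$ starts at $v$ with an edge in $M$. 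This produces an $M$-alternating path of length $\ell(u) + \ell(v) + 1$ whose first and last edges are in $M$, so $\ell(M) \geq \ell(u) + \ell(v) + 1$; maximizing over $uv \in E(G[B])$ yields $\ell(M) \geq \ell_1(M)$.

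For the upper bound $\ell(M) \leq \max\{\ell_0(M), \ell_1(M)\}$, take an $M$-alternating path $p$ with first and last edges in $M$ that realizes $\ell(M)$ and invoke Lemma~\ref{lem-ell-1}. Either $p$ is an $M$-admissible path for some $v \in B$, in which case $\length(p) \leq \ell(v) \leq \ell_0(M)$; or $p$ decomposes as the join of two $M$-admissible paths $p_u$ and $p_v$ for vertices $u, v \in B$ through an edge $uv \in E(G[B])$, in which case $\length(p) = \length(p_u) + \length(p_v) + 1 \leq \ell(u) + \ell(v) + 1 \leq \ell_1(M)$. Either way $\length(p) \leq \max\{\ell_0(M), \ell_1(M)\}$, which together with the previous step finishes the equality.

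The main technical point is the alternation verification in the concatenation used for the lower bound (and, symmetrically, faithfully reading off the two admissible sub-paths from the join produced by Lemma~\ref{lem-ell-1} in the upper bound). Both reduce to the same fact: since $M$ is an ordered matching with the decomposition $\{u_i v_i\}_{i=1}^{s}$, $u_i \in A$, $v_i \in B$, any edge with both endpoints in $B$ automatically lies outside $M$. Combined with the admissible paths' defining property that their first and last edges are in $M$, the alternation is preserved across both junctions, after which the proof is a direct bookkeeping of lengths.
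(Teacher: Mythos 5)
Your argument follows the same route as the paper's: both inequalities are driven by Lemma~\ref{lem-ell-1}, and your concatenation for $\ell(M)\geqslant \ell_1(M)$ (reverse of $p_u$, then the edge $uv$, which is not in $M$ because $u,v\in B$ while every matching edge meets both $A$ and $B$, then $p_v$) is exactly the verification the paper leaves implicit when it says the reverse inequality ``follows from Lemma~\ref{lem-ell-1}.'' That half is correct and, if anything, more carefully written than the original.

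The gap is in the upper bound. You begin with ``take an $M$-alternating path $p$ with first and last edges in $M$ that realizes $\ell(M)$,'' but $\ell(M)$ is defined as the maximum of $\length(p)$ over \emph{all} $M$-alternating paths, including those whose first or last edge lies in $E(G)\setminus M$; nothing in the definitions entitles you to assume the maximum is attained by a path of the restricted kind, so you cannot directly hand a maximizer to Lemma~\ref{lem-ell-1}. The paper closes exactly this hole with three additional cases: whenever an end edge of $p$ is not in $M$, it prepends (or appends) the $M$-partner of the corresponding endpoint to produce a strictly longer $M$-alternating path $p'$ whose first and last edges are in $M$, and then bounds $\length(p)<\length(p')\leqslant\max\{\ell_0(M),\ell_1(M)\}$ via Lemma~\ref{lem-ell-1}. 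Note that the fact you are implicitly assuming --- every longest $M$-alternating path begins and ends with matching edges --- is precisely the remark the paper extracts \emph{from} this case analysis, so assuming it up front is circular in context; and the extension step is not entirely cosmetic, since it uses that the offending endpoint is covered by $M$. You need to supply this reduction (or otherwise bound the lengths of alternating paths whose end edges are not in $M$) to complete the upper bound.
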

\begin{proof} Let $p$ be any $M$-alternating path. We first claim that 
$$\length(p) \leqslant \max\{\ell_0(M), \ell_1(M)\}.$$
Indeed, we consider four possible cases:

\medskip

{\it Case 1}: The first and the last edges of $p$ are in $M$, then the desired inequality follows from Lemma  \ref{lem-ell-1}.

{\it Case 2}:  The first edge of $p$ is not in $M$ but the last one is in $M$. Let $u$ is the origin of $p$ and let $v$ is a vertex such that $uv\in M$. Then, by adding $v$ to the beginning of $p$ we obtain a path $p'$ with the first and the last edges in $M$. By Lemma \ref{lem-ell-1} we imply that
$$\length(p) = \length(p')-1< \length(p')\leqslant \max\{\ell_0(M), \ell_1(M)\}.$$

{\it Case 3}: The first edge of $p$ is in $M$, but the last one is not in $M$. By the same argument as in Case $2$, we can add a suitable vertex at the end of $p$ to get a path $p'$ with the first and the last edges in $M$. Then,
$$\length(p) =\length(p')-1<\length(p')\leqslant \max\{\ell_0(M), \ell_1(M)\}.$$

{\it Case 4}: The first and the last edges of $p$ are not in $M$. Then, by the same argument as in Case $2$, we can add a vertex to the beginning of $p$ and a vertex at the end of $p$ to get an $M$-alternating path $p'$ with the first and the last edges in $M$. Then,
$$\length(p) =\length(p')-2< \length(p') \leqslant \max\{\ell_0(M), \ell_1(M)\},$$
and the claim follows.

\medskip

We now prove the lemma. By the claim above, $\ell(M) \leqslant \max\{\ell_0(M), \ell_1(M)\}$. The reverse inequality follows from Lemma \ref{lem-ell-1}, and the proof is complete.
\end{proof}

\begin{exam} Let $G$ be the graph depicted in Figure $2$. 

\begin{center}
\includegraphics[scale=0.5]{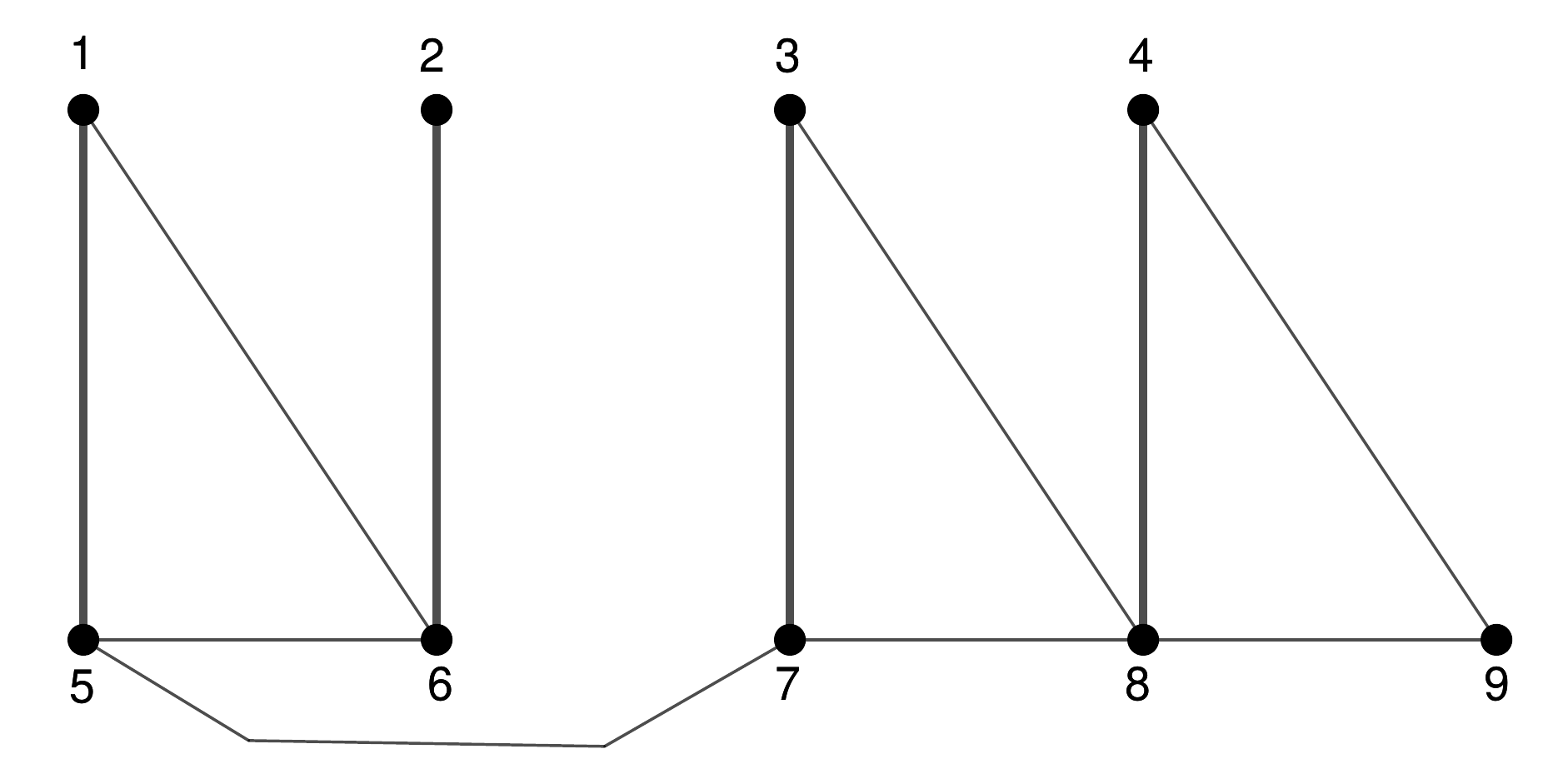}\\
\medskip

{\it Figure $2$. Graph $G$ with $\ell(M) = 7$}
\end{center}

\medskip

We have the set  $M = \{\{1,5\}, \{2,6\}, \{3,7\}, \{4,8\}\}$ is a maximum ordered-matching in $G$. In particular, $\nu_0(G) = \nu(G) = 4$, and  $M$ has the free parameter set $A =\{1,2,3,4\}$ and the partner set $B = \{5,6,7,8\}$. 

Observe that $G(A,B)$ consists of two disjoint paths: $\{2,6,1,5\}$ and $\{4,8,3,7\}$ of length $3$. Thus,
$$\ell(1) = 1, \ \ell(2)=3, \ \ell(5) = 3, \ \ell(6)=1,$$
and
$$\ell(3) = 1, \ \ell(4)=3, \ \ell(7) = 3, \ \ell(8)=1,$$
so that $\ell_0(M) = \max\{\ell(v)\mid v=5,6,7,8\} = 3$ and
$$\ell_1(M)= \max\left\{\ell(u)+\ell(v)+1 \mid uv\in E(G[B])\right\} = \ell(5)+\ell(7)+1 = 7.$$
Thus, $\ell(M) = \max\{\ell_0(M), \ell_1(M)\} = 7$. Note also that $$2\to 6 \to 1 \to 5 \to 7 \to 3 \to 8 \to 4,$$
is a longest $M$-admissible path. It is the join of two $M$-admissible paths of $5$ and $7$, respectively, via the edge $\{5,7\}$.

In order to compute $\ell(G)$, we consider the maximum ordered matching $$M'=\{\{1,5\}, \{2,6\}, \{4,9\}, \{3,8\}\}.$$
With this ordered matching, we have $\ell(M') = 4$, thus $\ell(G) \leqslant 4$. 

We will show that $\ell(G)=4$ by showing that $\ell(M'')\geqslant 4$ for every maximum ordered matching $M''$ of $G$. In order to prove this, observe that $G[M'']$ is obtained from $G$ by deleting one vertex. By consider all cases when removing one vertex from $G$ and using Remark \ref{perfect-ell} we can verify that $\ell(M'')\geqslant 4$, as required.
\end{exam}

By Lemma \ref{lem-ell} we deduce that $\ell(G)$ is finite, the next result gives explicitly bounds for this number.

\begin{lem}\label{up-nu} Let $G$ be a graph. Then,
\begin{enumerate}
\item $\ell(G) \leqslant 4\nu_0(G)-3$.
\item If $G$ is bipartite, then $\ell(G) \leqslant 2\nu_0(G)-1$.
\end{enumerate}
\end{lem}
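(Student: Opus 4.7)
To bound $\ell(G)$ from above, I will take an arbitrary maximum ordered matching $M=\{u_iv_i:i=1,\ldots,s\}$ with $s=\nu_0(G)$ and partner set $B$, and bound $\ell(M)=\max\{\ell_0(M),\ell_1(M)\}$ using Lemma~\ref{lem-ell}. The crux is a \emph{monotonicity of matching indices along admissible paths}: I claim that for any $v_j\in B$, every $M$-admissible path starting at $v_j$ has length at most $2(s-j)+1$. Such a path, being simple by Lemma~\ref{simple-admissible}, has the shape $v_{j_1}\to u_{j_1}\to v_{j_2}\to u_{j_2}\to\cdots\to v_{j_k}\to u_{j_k}$ with $j_1=j$. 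Each non-matching edge $\{u_{j_l},v_{j_{l+1}}\}$ lies in $E$ but not in $M$, so Definition~\ref{ordered-matching}(2) gives $j_l\leq j_{l+1}$, and strictness follows because $\{u_{j_l},v_{j_l}\}\in M$ while $\{u_{j_l},v_{j_{l+1}}\}\notin M$. Hence $j_1<\cdots<j_k\leq s$ and the length $2k-1\leq 2(s-j)+1$.

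\textbf{Part (1).} The key step gives $\ell_0(M)\leq 2s-1$ directly (attained at $j=1$). For $\ell_1(M)$, every edge $v_iv_j\in E(G[B])$ has $i\neq j$; assuming $i<j$, one has $i+j\geq 3$, so
\[
\ell(v_i)+\ell(v_j)+1 \leq (2(s-i)+1)+(2(s-j)+1)+1 = 4s-2(i+j)+3 \leq 4s-3.
\]
When $s=1$ the set $B$ is a single vertex, $\ell_1(M)=0$ by convention, and the bound is trivial. Therefore $\ell(G)\leq\ell(M)\leq 4\nu_0(G)-3$.

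\textbf{Part (2).} Suppose $G$ is bipartite with parts $(X,Y)$. Each matching edge $\{u_i,v_i\}$ has its endpoints in different parts, so I split $M=M_X\sqcup M_Y$ according to whether $v_i\in X$ or $v_i\in Y$, giving $|M_X|+|M_Y|=s$. In any admissible path from $v_j\in B\cap X$, consecutive vertices lie in different bipartition classes (since every edge of $G$ does), forcing every $v_{j_l}\in X$ and hence every matching edge used to lie in $M_X$; this sharpens the key step to $\ell(v_j)\leq 2|M_X|-1$, with the symmetric bound $2|M_Y|-1$ for $v_j\in B\cap Y$. For an edge $v_iv_j\in E(G[B])$, bipartiteness puts $v_i,v_j$ in opposite parts, so the two admissible paths draw from \emph{disjoint} matching pools, yielding $\ell(v_i)+\ell(v_j)+1\leq 2(|M_X|+|M_Y|)-1=2s-1$. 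Combined with $\ell_0(M)\leq 2s-1$, this gives $\ell(G)\leq 2\nu_0(G)-1$. The main obstacle throughout is the monotonicity claim and the observation that admissible paths from opposite sides of the bipartition respect the decomposition $M=M_X\sqcup M_Y$; once these are in place, both bounds reduce to elementary arithmetic.
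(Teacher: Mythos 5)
Your proof is correct, and in both parts it takes a genuinely different route from the paper's. The engine of your argument --- that along an $M$-admissible path $v_{j_1}\to u_{j_1}\to v_{j_2}\to\cdots\to u_{j_k}$ rooted at $v_j\in B$ the matching indices strictly increase, whence $\ell(v_j)\leqslant 2(s-j)+1$ --- is a quantitative refinement of the monotonicity that the paper's Lemma \ref{simple-path} establishes but uses only to conclude simplicity (and hence the cruder uniform bound $2s-1$ on every admissible path). For part (1), the paper controls $\ell_1(M)$ by showing that the two admissible paths meeting at an edge of $G[B]$ cannot both attain the extremal length $2s-1$ (a leaf argument combined with parity), while your inequality $\ell(v_i)+\ell(v_j)+1\leqslant 4s-2(i+j)+3\leqslant 4s-3$ reaches the same constant by index bookkeeping alone; your version is shorter and is in fact sharper whenever $i+j>3$. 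For part (2), the paper invokes \cite[Lemma 3.4]{HT} to replace $M$ by a perfect ordered matching of $G[M]$ whose partner set is one side of the bipartition, which kills $\ell_1$ outright; your decomposition $M=M_X\sqcup M_Y$, together with the observation that admissible paths rooted in $B\cap X$ and in $B\cap Y$ consume disjoint pools of matching edges, achieves the same bound without that external lemma and yields the slightly stronger conclusion that $\ell(M)\leqslant 2\nu_0(G)-1$ for \emph{every} maximum ordered matching of a bipartite graph, not just a suitably re-ordered one.
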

\begin{proof} (1)  Let $M$ be a maximum ordered matching of $G$ with the free parameter and the partner sets $A$ and $B$, respectively. By Lemma \ref{lem-ell}, it suffices to show that $\ell_0(M)\leqslant 4\nu_0(G)-3$ and $\ell_1(M)\leqslant 4\nu_0(G)-3$. First, for an $M$-admissible path $p$ for a vertex $v\in B$, since $p$ is a simple path in $G[M]$, it follows that
$$\length(p) \leqslant |V(G[M])|-1 = 2\nu_0(G)-1\leqslant 4\nu_0(G)-3.$$
Hence, $\ell_0(M)\leqslant 4\nu_0(G)-3$.

Now, assume that $\ell_1(M) = \ell(u)+\ell(v)+1$ where $uv\in E(G[B])$. We may assume that $\ell(u)\leqslant \ell(v)$, hence $\ell(u)\leqslant \ell(v) \leqslant 2\nu_0(G)-1$, where the last inequality follows from the same argument as in the previous paragraph. Let $p_1$ and $p_2$ be two $M$-admissible paths for $u$ and $v$, respectively, with $\length(p_1)=\ell(u)$ and $\length(p_2) = \ell(v)$.  If $\ell(u) = 2\nu_0(G)-1$, then $p_1$ is a longest $M$-admissible path in $G(A,B)$. It follows that $u$ is a leaf in $G(A,B)$. In particular, $u$ is not in $p_2$ since $p_2$ has the terminus in $A$. But then, $\length(p_2) \leqslant 2\nu_0(G)-2 < \length(p_1)$, a contradiction. Thus, $\length(p_1) < 2\nu_0(G)-1$. Since $\length(p_1)$ is odd, we have $\length(p_1)\leqslant 2\nu_0(G)-3$. Hence,
$$\ell_1(M) = \length(p_1)+\length(p_2)+1\leqslant 2\nu_0(G)-3+2\nu_0(G)-1 + 1 = 4\nu_0(G)-3,$$
and  Part 1 follows.
\medskip

(2)  Assume that $G$ is bipartite so that $G[M]$ is  bipartite. Let $(X,Y)$ be a bipartition of $G[M]$. Note that $M$ is a perfect ordered matching in $G[M]$. By \cite[Lemma 3.4]{HT}, $G[M]$ has a perfect ordered matching $M'$ with the free parameter set $X$ and the partner set $Y$. Since $M$ and $M'$ are perfect matchings of $G$, so that $M' = M$ as subsets of $E(G)$ by Lemma \ref{unique-matching}. In particular, $\ell(M) = \ell(M')$. Now, we have $\ell_1(M')=0$, so that $\ell(M') = \ell_0(M')$. The inequality $\ell_0(M') \leqslant 2\nu_0(G)-1$ is obvious, so $\ell(G) \leqslant 2\nu_0(G)-1$, and the proof is complete.
\end{proof}

We conclude this section with the following remark about $M$-alternating paths that are taken from the proofs of Lemmas \ref{lem-ell} and \ref{up-nu}.

\begin{rem} Let $G$ be a graph and $M$ be ordered matching in $M$. Then,
\begin{enumerate}
\item Every longest $M$-alternating path has the first and the last edges in $M$.
\item If $G$ is bipartite, then every $M$-alternating path is simple.
\end{enumerate}
\end{rem}

\subsection{Simplicial complexes and Stanley-Reisner ideals} We recall a relationship between cover ideals of graphs and simplicial complexes. A {\it simplicial complex} on $V = \{1,\ldots, r\}$ is a collection of subsets of $V$ such that if $\sigma\in \Delta $ and $\tau\subseteq \sigma$ then $\tau\in \Delta$. Elements of $\Delta$ are called faces. Maximal faces (with respect to inclusion) are called facets. For $F \in \Delta$, the dimension of $F$ is defined to be $\dim F = |F|-1$. The empty set, $\emptyset$, is the unique face of dimension $-1$, as long as $\Delta$ is not the void complex $\{\}$ consisting of no subsets of $V$.  The link of $F$ inside $\Delta$ is its subcomplex:
$$\lk_{\Delta}(F) = \{H\in \Delta \mid H\cup F\in \Delta \ \text{ and } H\cap F=\emptyset\}.$$

Every element in a face of $\Delta$ is called a {\it vertex} of $\Delta$. Let us denote $V(\Delta)$ to be the set of vertices of $\Delta$. If there is a vertex, say $j$, such that $\{j\}\cup F \in \Delta$ for every $F\in \Delta$, then $\Delta$ is called a {\it cone} over $j$. It is well-known that if $\Delta$ is a cone, then it is an acyclic complex. Recall that a chain complex is called an {\it acyclic} complex if all of whose homology groups are zero. 

For a subset $\tau = \{j_1,\ldots,j_i\}$ of $[r]$, denote $\x_\tau = x_{j_1} \cdots x_{j_i}$. Let $\Delta$ be a simplicial complex over the set $V=\{1,\ldots,r\}$. The Stanley-Reisner ideal of $\Delta$ is defined to be the squarefree monomial ideal
$$I_{\Delta} = (\x_\tau \mid \tau \subseteq [r] \text{ and } \tau \notin \Delta) \ \text{ in } R = K[x_1,\ldots,x_r]$$
and the {\it Stanley-Reisner} ring of $\Delta$ to be the quotient ring $k[\Delta] = R/I_{\Delta}$.  This provides a bridge between combinatorics and commutative algebra (see \cite{MS,ST}). 

Note that if $I$ is a square-free monomial ideal, then it is a Stanley-Reisner ideal of the simplicial complex $\Delta(I)= \{\tau \subseteq [r] \mid \x^\tau\not \in I\}$. When $I$ is a monomial ideal (maybe not square-free) we also use $\Delta(I)$ to denote the simplicial complex corresponding to the square-free monomial ideal $\sqrt{I}$. 

Let $\mathcal{F}(\Delta)$ be the set of facets of $\Delta$. If $\mathcal{F}(\Delta) =\{F_1,\ldots,F_m\}$, we write $\Delta = \left<F_1,\ldots,F_m\right>$.  Then, $I_{\Delta}$ has the primary-decomposition (see \cite[Theorem $1.7$]{MS}):
$$I_{\Delta} = \bigcap_{F\in\mathcal F(\Delta)} (x_i\mid i\notin F).$$

It follows that for $n\geqslant 1$, the $n$-th symbolic power of $I_{\Delta}$ is
$$I_{\Delta}^{(n)} = \bigcap_{F\in\mathcal F(\Delta)} (x_i\mid i\notin F)^n.$$

The {\it Alexander dual} of $\Delta$, denoted by $\Delta^*$, is the simplicial complex over $V$ defined by
$$\Delta^* = \{V\setminus \tau\mid \ \tau\notin \Delta\}.$$
Notice that $(\Delta^*)^* = \Delta$. If $I = I_\Delta$ then we shall denote the Stanley-Reisner ideal of the Alexander dual $\Delta^*$ by $I^*$. From the primary decomposition of $I_\Delta$ we obtain (see \cite[Definition 1.35]{MS}):
$$I_{\Delta}^* = (\x_{V\setminus F}\mid F\in \mathcal F(\Delta)).$$

The dual between homology groups of $\Delta$ and $\Delta^*$ is given by (see \cite[Theorem 5.6]{MS}). 

\begin{lem} \label{AlexanderDual} $\h_{i-1}(\Delta^*; K) \cong \h_{r-2-i}(\Delta; K)$ for all $i$.
\end{lem}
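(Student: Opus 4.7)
The statement is the classical combinatorial Alexander duality; the plan is to derive it from two ingredients: the bijection between faces of $\Delta^*$ and non-faces of $\Delta$ via complementation, together with the contractibility of the full simplex on $V=\{1,\ldots,r\}$.

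First, from the definition $\Delta^*=\{V\setminus\tau\mid\tau\notin\Delta\}$, the assignment $\sigma\mapsto V\setminus\sigma$ gives a bijection between the faces of $\Delta^*$ and the non-faces of $\Delta$, with dimensions summing to $r-2$. In particular, $(i-1)$-dimensional faces of $\Delta^*$ correspond to $(r-i-1)$-dimensional non-faces of $\Delta$.

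Next, write $2^V$ for the full simplex on $V$ and consider the relative cochain complex $C^\bullet(2^V,\Delta;K)$, whose $j$-th term is the $K$-dual of the span of the $j$-dimensional non-faces of $\Delta$. Using the bijection of the previous step, I would define a map $\Phi\colon C_{i-1}(\Delta^*;K)\to C^{r-i-1}(2^V,\Delta;K)$ by $[\sigma]\mapsto\varepsilon(\sigma)\,[V\setminus\sigma]^{*}$ for a suitable sign $\varepsilon(\sigma)=\pm 1$ depending on the positions of the elements of $\sigma$ in a fixed ordering of $V$. After fixing the sign rule, I would verify that $\Phi$ intertwines the simplicial boundary on the left with the simplicial coboundary on the right: up to sign, each boundary face of $\sigma$ (remove a vertex) matches a coboundary face of $V\setminus\sigma$ (adjoin the same vertex). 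This yields $\widetilde{H}_{i-1}(\Delta^*;K)\cong H^{r-i-1}(2^V,\Delta;K)$.

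Finally, since $2^V$ is a cone over any vertex it is acyclic, so the reduced cohomology long exact sequence of the pair $(2^V,\Delta)$ collapses to give $H^{r-i-1}(2^V,\Delta;K)\cong\widetilde{H}^{r-i-2}(\Delta;K)$. Because $K$ is a field, universal coefficients yield $\widetilde{H}^{r-i-2}(\Delta;K)\cong\widetilde{H}_{r-i-2}(\Delta;K)=\widetilde{H}_{r-2-i}(\Delta;K)$, and chaining the three isomorphisms gives the claim. The main obstacle is the sign bookkeeping when defining $\Phi$: one must pick $\varepsilon(\sigma)$ (typically the sign of the $(\sigma,V\setminus\sigma)$ shuffle relative to a fixed ordering of $V$) so that $\delta\circ\Phi=\pm\Phi\circ\partial$; once this is in place, the rest of the argument is essentially formal.
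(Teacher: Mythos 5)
The paper does not prove this lemma at all: it is quoted verbatim from Miller and Sturmfels \cite[Theorem 5.6]{MS}, so there is no in-paper argument to compare against. Your sketch is the standard proof of combinatorial Alexander duality, and it is essentially correct: the complementation bijection between faces of $\Delta^*$ and non-faces of $\Delta$, the identification (up to signs and a degree reversal) of the augmented chain complex of $\Delta^*$ with the relative cochain complex of the pair $(2^V,\Delta)$, the collapse of the long exact sequence of that pair using acyclicity of the full simplex, and universal coefficients over the field $K$. Two details should be made explicit before this counts as a complete proof. First, for the \emph{reduced} complexes to match, the bijection must be extended to the empty face of $\Delta^*$, which corresponds to the non-face $V$ of $\Delta$ sitting in cohomological degree $r-1$; this is also where the degenerate cases ($\Delta=2^V$, so $\Delta^*$ is void, and $\Delta$ void, so $\Delta^*=2^V$) must be disposed of separately, both sides of the isomorphism being zero there by convention. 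Second, the sign $\varepsilon(\sigma)$ must actually be exhibited --- the sign of the shuffle permutation sorting the concatenation of $\sigma$ and $V\setminus\sigma$ into a fixed total order of $V$ works --- and the identity $\delta\circ\Phi=\pm\,\Phi\circ\partial$ verified on a single generator; you correctly identify this as the only genuine computation in the argument. With those two items filled in, your proof is complete and is, in substance, the proof given in the cited source.
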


The regularity of a square-free monomial ideal can compute via the non-vanishing of reduced homology of simplicial complexes. From Hochster's formula on the Hilbert series of  the local cohomology module $H_\mi^i(R/I_\Delta)$ (see \cite[Theorem 13.13]{MS}), one has:

\begin{lem}\label{Hochster-Reg} For a simplicial complex $\Delta$, we have
$$\reg(I_\Delta)  = \max\{d\mid \h_{d-1}(\lk_{\Delta}(\sigma); K)\ne 0, \text{ for some } \sigma\in\Delta\} + 1.$$
\end{lem}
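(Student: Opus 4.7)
The plan is to derive the formula directly from Hochster's formula applied to local cohomology. First I would use the identity $\reg(I_\Delta) = \reg(R/I_\Delta) + 1$ together with the description
$$\reg(R/I_\Delta) = \max\{i + j \mid H_\mi^i(R/I_\Delta)_j \ne 0\}$$
recalled at the beginning of this section. So it suffices to compute the right-hand side in terms of the links of faces of $\Delta$.

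Next I would invoke the fine-graded Hochster formula \cite[Theorem 13.13]{MS}:
$$\operatorname{Hilb}(H_\mi^i(R/I_\Delta); t) = \sum_{\sigma \in \Delta} \dim_K \h_{i-|\sigma|-1}(\lk_\Delta(\sigma); K) \prod_{v \in \sigma} \frac{t_v^{-1}}{1-t_v^{-1}}.$$
Expanding the geometric series shows that the fine-graded piece $H_\mi^i(R/I_\Delta)_\alpha$ is nonzero only when $\alpha \leqslant \zv$, the support $\sigma = \supp(-\alpha)$ is a face of $\Delta$, and $\h_{i-|\sigma|-1}(\lk_\Delta(\sigma); K) \ne 0$. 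Moreover, the coarse degree of such an $\alpha$ equals $-\sum_{v \in \sigma}(-\alpha_v) \leqslant -|\sigma|$, with equality exactly when $-\alpha_v = 1$ for every $v \in \sigma$.

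Combining these two observations, the supremum of $i + j$ over pairs $(i,j)$ with $H_\mi^i(R/I_\Delta)_j \ne 0$ coincides with the supremum of $i - |\sigma|$ over pairs $(i, \sigma)$ with $\sigma \in \Delta$ and $\h_{i-|\sigma|-1}(\lk_\Delta(\sigma); K) \ne 0$. Reindexing by $d = i - |\sigma|$ gives
$$\reg(R/I_\Delta) = \max\{d \mid \h_{d-1}(\lk_\Delta(\sigma); K) \ne 0 \text{ for some } \sigma \in \Delta\},$$
and adding one yields the stated formula for $\reg(I_\Delta)$. The only real subtlety, and hence the main (minor) obstacle, is the bookkeeping that the maximum over the fine grading is attained precisely at the minimal support vector — that is, when every coordinate of $-\alpha$ on $\sigma$ equals $1$ — so that passing from the fine to the coarse grading does not lose any value of $i + j$.
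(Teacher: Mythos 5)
Your derivation is correct and is precisely the route the paper intends: it states the lemma without proof, citing only Hochster's formula for the Hilbert series of $H_\mi^i(R/I_\Delta)$ from \cite[Theorem 13.13]{MS}, and your argument simply carries out the standard bookkeeping (maximal coarse degree $-|\sigma|$ attained at the squarefree vector supported on $\sigma$, then $\reg(I_\Delta)=\reg(R/I_\Delta)+1$). No gaps.
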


\subsection{Degree complexes} Let $I$ be a non-zero monomial ideal. Since $R/I$ is an $\mathbb N^r -$ graded algebra, $H^i_{\frak m}(R/I)$ is an $\mathbb Z^r$-graded module over $R/I$ for every $i$. For each degree $\alb=(\alpha_1,\ldots,\alpha_r)\in\Z^r$, in order to compute $\dim_K H_{\mi}^i(R/I)_{\alb}$ we use a formula given by Takayama \cite[Theorem $2.2$]{T} which is a generalization of Hochster's formula for the case $I$ is squarefree \cite[Theorem 4.1]{HO}.

Set $G_{\alb}:=\{i\mid \alpha_i<0\}$. For a subset $F\subseteq V$, we let $R_F:=R[x_i^{-1}\mid i\in F]$. Define the {\it degree complex} $\Delta_{\alb}(I)$ by
\begin{equation}\label{degree-complex}
\Delta _{\alpha }(I) :=\{F\subseteq V\setminus G_{\alb}\mid x^{\alpha }\notin IR_{F\cup G_{\alb}}\}.
\end{equation}

\begin{lem} \label{TA}\cite[Theorem 2.2]{T}  $\dim_K {H_{\frak m}^i(R/I)_{\alb}}=\dim_K \widetilde{H}_{i-\mid G_{\alpha }\mid-1 }(\Delta _{\alb}(I);K).$ 
\end{lem}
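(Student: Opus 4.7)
The plan is to compute $H^i_{\mi}(R/I)$ via the \v{C}ech complex on $x_1,\ldots,x_r$ and identify the degree-$\alb$ strand with the reduced cochain complex of $\Delta_{\alb}(I)$, shifted by $|G_{\alb}|$. Throughout, for $F\subseteq V$ write $R_F := R[x_i^{-1}\mid i\in F]$.

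First I would set up the $\Z^r$-graded \v{C}ech complex
$$\mathcal{C}^\bullet\colon\ 0 \to R/I \to \bigoplus_{|F|=1}(R/I)_F \to \cdots \to \bigoplus_{|F|=k}(R/I)_F \to \cdots \to (R/I)_V \to 0,$$
whose $i$-th cohomology is $H^i_{\mi}(R/I)$. Each summand $(R/I)_F$ is $\Z^r$-graded, and the piece $((R/I)_F)_{\alb}$ is either zero or a one-dimensional $K$-space spanned by $x^{\alb}$; it is nonzero precisely when $G_{\alb}\subseteq F$ (so that $x^{\alb}$ makes sense in $R_F$) and $x^{\alb}\notin IR_F$ (so that it survives modulo $I$).

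Second, I would re-index the surviving summands. Writing $F = G_{\alb}\sqcup F'$ with $F'\subseteq V\setminus G_{\alb}$, the nonvanishing condition on $((R/I)_F)_{\alb}$ becomes exactly $F'\in\Delta_{\alb}(I)$, by the definition \eqref{degree-complex}. Hence the degree-$\alb$ strand of $\mathcal{C}^\bullet$ in cohomological degree $k$ is the $K$-vector space with basis indexed by faces $F'\in\Delta_{\alb}(I)$ of cardinality $k-|G_{\alb}|$, i.e. of simplicial dimension $k-|G_{\alb}|-1$. A sign-bookkeeping (using a fixed total order on $V$) shows that the \v{C}ech differential restricts to the augmented simplicial coboundary of $\Delta_{\alb}(I)$. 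Passing to cohomology gives
$$H^i_{\mi}(R/I)_{\alb}\cong \h^{\,i-|G_{\alb}|-1}(\Delta_{\alb}(I);K),$$
and since $K$ is a field and $\Delta_{\alb}(I)$ is finite, the universal coefficient theorem yields $\dim_K \h^{j}(\Delta_{\alb}(I);K) = \dim_K \h_j(\Delta_{\alb}(I);K)$ for every $j$, giving the claimed equality.

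The main obstacle is the bookkeeping in step two. First, one must verify that $\Delta_{\alb}(I)$ is genuinely a simplicial complex, i.e. closed under taking subsets: this follows from the observation that if $x^{\alb}\notin IR_{F\cup G_{\alb}}$ and $F''\subseteq F$, then $x^{\alb}\notin IR_{F''\cup G_{\alb}}$, since inverting fewer variables can only make the localized ideal smaller. Second, the signs in the \v{C}ech differential must be matched against those in the simplicial coboundary operator; this is routine once a total ordering of $V$ is chosen, but it is the technical heart of the identification.
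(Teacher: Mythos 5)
Your proof is correct, and it is essentially the standard argument for Takayama's formula: the paper itself gives no proof of this lemma, citing \cite[Theorem 2.2]{T}, and the proof there (as for its antecedent, Hochster's formula) is exactly this identification of the degree-$\alb$ strand of the $\Z^r$-graded \v{C}ech complex with the augmented cochain complex of $\Delta_{\alb}(I)$ shifted by $|G_{\alb}|+1$, followed by duality over the field $K$. The two points you single out as requiring care --- that $((R/I)_F)_{\alb}$ is nonzero precisely when $G_{\alb}\subseteq F$ and $x^{\alb}\notin IR_F$, and the sign bookkeeping matching the \v{C}ech differential with the simplicial coboundary --- are indeed the only substantive checks.
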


The following result of Minh and Trung is very useful for computing $\Delta_{\alb}(I_\Delta^{(n)})$,  for $\alb\in\N^r$ and $n\geqslant 1$.

\begin{lem}\cite[Lemma 1.3]{MT1} \label{MTr} Let $\Delta$ be a simplicial complex and $\alb \in\N^r$. Then,
	$$\F(\Delta_{\alb}(I_\Delta^{(n)})) =\left\{F\in \F(\Delta) \mid \sum_{i\notin F} \alpha_i \leqslant n-1\right\}.$$ 
\end{lem}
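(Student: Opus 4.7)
The plan is to reduce the problem to analyzing when $x^{\alb}$ lies in localizations of the primary components of $I_\Delta^{(n)}$. Since $\alb \in \N^r$, we have $G_{\alb}=\emptyset$, so by definition
\[
\Delta_{\alb}(I_\Delta^{(n)})=\{F\subseteq V \mid x^{\alb}\notin I_\Delta^{(n)}R_F\}.
\]
Using the primary decomposition $I_\Delta^{(n)}=\bigcap_{G\in\F(\Delta)}\mathfrak p_G^{\,n}$ with $\mathfrak p_G=(x_i\mid i\notin G)$, I get that $F\in \Delta_{\alb}(I_\Delta^{(n)})$ iff there exists $G\in\F(\Delta)$ such that $x^{\alb}\notin \mathfrak p_G^{\,n}R_F$.

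The core calculation is the following membership criterion: $x^{\alb}\notin \mathfrak p_G^{\,n}R_F$ iff $F\subseteq G$ and $\sum_{i\notin G}\alpha_i\leqslant n-1$. Indeed, if $F\not\subseteq G$ then some $x_i$ with $i\notin G$ is invertible in $R_F$, forcing $\mathfrak p_G^{\,n}R_F=R_F$. If $F\subseteq G$, then the variables appearing in generators of $\mathfrak p_G$ are all non-inverted in $R_F$, so divisibility of $x^{\alb}$ by a generator $\prod_{j\notin G}x_j^{c_j}$ (with $\sum c_j=n$) up to units of $R_F$ amounts to requiring $c_j\leqslant \alpha_j$ for all $j\notin G$, which is possible iff $\sum_{j\notin G}\alpha_j\geqslant n$. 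The negation gives the claim.

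Combining these, $F\in\Delta_{\alb}(I_\Delta^{(n)})$ iff $F$ is contained in some facet $G\in\F(\Delta)$ with $\sum_{i\notin G}\alpha_i\leqslant n-1$. I then identify the facets of $\Delta_{\alb}(I_\Delta^{(n)})$: any such $F$ is contained in a $G$ with this property and $G$ itself is again in $\Delta_{\alb}(I_\Delta^{(n)})$, so the facets must be among the $G\in\F(\Delta)$ satisfying $\sum_{i\notin G}\alpha_i\leqslant n-1$. Conversely, if $G\in\F(\Delta)$ satisfies the inequality and $G\subseteq G'\in\Delta_{\alb}(I_\Delta^{(n)})$, then $G'$ sits inside some facet $G''\in\F(\Delta)$; maximality of $G$ in $\Delta$ forces $G=G''=G'$, proving $G$ is a facet of $\Delta_{\alb}(I_\Delta^{(n)})$.

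I expect no serious obstacle; the only delicate point is the bookkeeping in the localization argument, where one must carefully distinguish which variables are units in $R_F$ and track the condition $F\subseteq G$ that survives after localization. Everything else is a formal consequence of the primary decomposition of $I_\Delta^{(n)}$ and the definition of the degree complex.
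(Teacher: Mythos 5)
Your argument is correct: localizing the primary decomposition $I_\Delta^{(n)}=\bigcap_{G\in\F(\Delta)}\mathfrak p_G^{\,n}$ at $R_F$, checking monomial membership to get the criterion ``$F\subseteq G$ and $\sum_{i\notin G}\alpha_i\leqslant n-1$,'' and then noting that the qualifying $G$ are pairwise incomparable (being facets of $\Delta$) is exactly the standard proof of this statement, which the paper itself does not reprove but only cites from Minh--Trung. No gaps; the one step you gloss over (that localization commutes with the finite intersection) is routine by flatness of $R_F$ over $R$.
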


\subsection{Edge Ideals} 
Let $G$ be a finite simple graph. Assume that $V(G)=\{1,\ldots,r\}$. The edge ideal of $G$ is define by
$$I(G) = (x_ix_j \mid \{i,j\} \in E(G)) \subseteq R.$$

Let $\Delta(G)$ denote the set of all independent sets of $G$. Then, $\Delta(G)$ is a simplicial complex, called the {\it independence complex} of $G$. It is well-known that $I(G) = I_{\Delta(G)}$.
\medskip

H\`{a} and Van Tuyl  gave a bound for $\reg I(G)$ via the matching number of $G$.
\begin{lem} \cite[Theorem 6.7]{HTuyl} \label{upperBoundRegEdge} $\reg(I(G)) \leqslant \nu(G)+1$.\end{lem}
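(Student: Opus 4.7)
The plan is induction on $|V(G)|$. The base case is a single edge $G=\{i,j\}$, where $I(G)=(x_ix_j)$ has regularity $2=\nu(G)+1$.

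For the inductive step, I would pick a non-isolated vertex $x$ of $G$ and exploit the short exact sequence
\begin{equation*}
0 \longrightarrow \bigl(R/(I(G):x)\bigr)(-1) \stackrel{\cdot x}{\longrightarrow} R/I(G) \longrightarrow R/(I(G),x) \longrightarrow 0,
\end{equation*}
which yields $\reg(R/I(G)) \leqslant \max\{\reg(R/(I(G):x))+1,\ \reg(R/(I(G),x))\}$. The combinatorial identification of these two ideals is standard: $(I(G),x)$ is the edge ideal of $G\setminus x$ together with the linear form $x$, while $(I(G):x)=I(G\setminus N[x])+(y \mid y\in N(x))$. Since modding out by a regular sequence of linear forms does not change regularity, $\reg(R/(I(G):x))$ and $\reg(R/(I(G),x))$ coincide with the regularities of the edge ideals of $G\setminus N[x]$ and $G\setminus x$ inside the appropriate smaller polynomial rings.

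The key combinatorial input is the inequality $\nu(G\setminus N[x]) \leqslant \nu(G)-1$. To prove it, take a maximum matching $M$ of $G$: if $x$ is saturated by $M$ the conclusion is immediate, and otherwise every $y\in N(x)$ must itself be saturated by $M$ (else $M\cup\{xy\}$ would be a larger matching), so removing $N[x]$ destroys at least one edge of $M$. Since $G\setminus x$ and $G\setminus N[x]$ both have strictly fewer vertices than $G$, the induction hypothesis applies and gives
\begin{equation*}
\reg(R/I(G)) \leqslant \max\{\nu(G\setminus N[x])+1,\ \nu(G\setminus x)\} \leqslant \nu(G),
\end{equation*}
which is equivalent to the claimed bound $\reg(I(G)) \leqslant \nu(G)+1$.

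The main obstacle I anticipate is the bookkeeping for the first summand: carefully verifying that the extra linear generators $\{y : y\in N(x)\}$ in $(I(G):x)$ really do not contribute to the regularity, so that the inductive bound for $\reg(I(G\setminus N[x]))$ transfers intact through the shift and the short exact sequence. Everything else is a routine regularity short exact sequence argument combined with the light matching-theoretic observation above.
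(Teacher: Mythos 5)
The paper does not prove this statement: it is quoted verbatim from H\`a--Van Tuyl \cite[Theorem 6.7]{HTuyl}, so there is no internal proof to compare against. Your argument is the standard colon-ideal induction and it does establish the bound independently of that citation: the short exact sequence, the identifications $(I(G),x)=I(G\setminus x)+(x)$ and $(I(G):x)=I(G\setminus N[x])+(y\mid y\in N(x))$, the observation that the extra variables form a linear regular sequence on $R/I(G\setminus N[x])$ and hence do not affect regularity, and the closing of the induction from $\nu(G\setminus x)\leqslant \nu(G)$ and $\nu(G\setminus N[x])\leqslant \nu(G)-1$ are all sound. (For the record, the cited source proves the inequality in the more general setting of hypergraph edge ideals by a different induction, so your route is genuinely different from, and more elementary than, the quoted one.)

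The one step you should repair is your proof of $\nu(G\setminus N[x])\leqslant \nu(G)-1$. Starting from a maximum matching $M$ of $G$ and noting that deleting $N[x]$ destroys at least one edge of $M$ only bounds the size of the restriction $M\cap E(G\setminus N[x])$; it says nothing about a maximum matching of $G\setminus N[x]$, which need not be contained in $M$. The correct (and shorter) argument runs in the opposite direction: let $M'$ be a maximum matching of $G\setminus N[x]$ and let $y$ be any neighbour of $x$, which exists because $x$ is not isolated. No edge of $M'$ meets $\{x,y\}\subseteq N[x]$, so $M'\cup\{xy\}$ is a matching of $G$, whence $\nu(G\setminus N[x])+1\leqslant \nu(G)$. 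With this substitution the induction closes. A last bit of housekeeping: when $G\setminus x$ or $G\setminus N[x]$ has no edges, the corresponding edge ideal is zero and the relevant regularity is $0$, which still satisfies the required inequalities since $\nu(G)\geqslant 1$.
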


In the paper we need a characterization of graphs $G$ such that $\reg(I(G)) = \nu(G)+1$.  In order to do this, we recall the definition of Cameron-Walker graphs: a graph $G$ is called a {\it Cameron-Walker} graph if $\nu'(G)=\nu(G)$ (for the structure of such a graph, see  \cite{CaWa, HHKO}). Then, we have the folowing result (see \cite[Theorem 11]{Tr}).

\begin{lem}\label{Tr}  Let $G$ be a graph. Then, $\reg(I(G)) = \nu(G)+1$ if and only if each connected component of $G$ is either a pentagon or a Cameron-Walker graph.
\end{lem}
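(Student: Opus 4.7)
The plan is to combine three ingredients that fit together cleanly: the Hà–Van Tuyl upper bound $\reg(I(G))\leqslant \nu(G)+1$ of Lemma \ref{upperBoundRegEdge}, Katzman's induced–matching lower bound $\reg(I(G))\geqslant \nu'(G)+1$, and the additivity formulas $\reg(I(G_1\sqcup G_2))=\reg(I(G_1))+\reg(I(G_2))-1$ and $\nu(G_1\sqcup G_2)=\nu(G_1)+\nu(G_2)$, both of which follow from the tensor product decomposition $R/I(G)=R_1/I(G_1)\otimes_K R_2/I(G_2)$. Since $\reg(I(G))\leqslant \sum_i (\nu(G_i)+1)-(k-1)=\nu(G)+1$ with equality iff each component attains its bound, the whole problem reduces to the connected case.

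For the easy (``if'') direction, if the connected $G$ is Cameron–Walker, the definition $\nu'(G)=\nu(G)$ squeezes the two bounds and gives $\reg(I(G))=\nu(G)+1$ immediately. If $G=C_5$, I would compute $\reg(I(C_5))$ directly via Lemma \ref{Hochster-Reg}: the maximal independent sets of $C_5$ are the five pairs $\{i,i+2\}$, so $\Delta(C_5)$ is a $1$–dimensional simplicial circle and $\widetilde H_1(\Delta(C_5);K)\neq 0$. Taking $\sigma=\emptyset$ in Lemma \ref{Hochster-Reg} then yields $\reg(I(C_5))\geqslant 3=\nu(C_5)+1$, and the upper bound forces equality.

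For the hard (``only if'') direction I would induct on $|V(G)|$, assuming $G$ connected with $\reg(I(G))=\nu(G)+1$ but $G\notin\{C_5\}\cup\{\text{Cameron--Walker}\}$. Picking a vertex $v$ and using the short exact sequence
$$0\longrightarrow \bigl(R/(I(G):x_v)\bigr)(-1)\xrightarrow{\,\cdot x_v\,} R/I(G)\longrightarrow R/(I(G),x_v)\longrightarrow 0,$$
together with $(I(G):x_v)=I(G\setminus N[v])+(x_u:u\in N(v))$ and $(I(G),x_v)=I(G\setminus v)+(x_v)$, gives the standard recursion
$$\reg(I(G))\leqslant \max\bigl\{\reg(I(G\setminus N[v]))+2,\ \reg(I(G\setminus v))+1\bigr\}.$$
Equality at $\nu(G)+1$ forces one of these subgraphs to attain its matching bound, and the inductive hypothesis structurally describes it as a disjoint union of pentagons and Cameron--Walker graphs.

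The main obstacle will be the ``lifting'' step: translating that local structural conclusion about $G\setminus v$ or $G\setminus N[v]$ into a global statement about $G$. The pentagon is the essential difficulty, since $C_5$ has no leaf and no simplicial vertex, so it cannot be reached by stripping any single vertex from a strictly larger graph while preserving $\reg=\nu+1$. One has to check that whenever a vertex $v$ is attached to a Cameron--Walker (or pentagon) subgraph of $G\setminus v$ in such a way that the regularity remains maximal, the result is again Cameron--Walker, and that any connected graph strictly containing $C_5$ either loses a unit of regularity or falls into the Cameron--Walker class. This case analysis is driven by the explicit structural classification of Cameron--Walker graphs from \cite{CaWa, HHKO}, and handling short odd cycles other than $C_5$ — where $\nu'<\nu$ but one must still strictly beat $\nu(G)+1$ from above — is the most delicate point.
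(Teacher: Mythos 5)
The paper does not prove this lemma at all --- it is quoted verbatim from \cite[Theorem 11]{Tr} --- so there is no in-paper argument to compare against; I am judging your proposal on its own terms. The reduction to connected components via additivity of $\reg$ and $\nu$ over disjoint unions is correct, and the ``if'' direction is complete: Katzman's bound $\reg(I(G))\geqslant \nu'(G)+1$ together with Lemma \ref{upperBoundRegEdge} handles Cameron--Walker graphs, and your computation that $\Delta(C_5)$ is a simplicial circle gives $\reg(I(C_5))=3$.

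The ``only if'' direction, which is the entire content of the theorem, has a genuine gap. First, a technical point: the recursion is off by one; the correct form is $\reg(I(G))\leqslant \max\{\reg(I(G\setminus N[v]))+1,\ \reg(I(G\setminus v))\}$ (the twist $(-1)$ in the exact sequence contributes the single $+1$ on the colon term only). More seriously, even with the correct inequality, your claim that ``equality at $\nu(G)+1$ forces one of these subgraphs to attain its matching bound'' does not follow: from $\nu(G)+1\leqslant \reg(I(G\setminus N[v]))+1$ one only gets $\reg(I(G\setminus N[v]))\geqslant \nu(G)\geqslant \nu(G\setminus N[v])+1$ \emph{or} $=\nu(G\setminus N[v])$, depending on how much $\nu$ drops when $N[v]$ is removed, and one must separately control that drop. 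Finally, the ``lifting'' step --- that a connected graph obtained by attaching a vertex to a disjoint union of pentagons and Cameron--Walker graphs, subject to $\reg=\nu+1$ being preserved, is itself Cameron--Walker, and that no connected graph strictly containing $C_5$ survives --- is precisely the theorem, and you explicitly defer it (``one has to check\ldots''). As written this is a plausible strategy outline, not a proof; the case analysis against the Cameron--Walker structure theorem of \cite{CaWa,HHKO}, and in particular the treatment of connected graphs with $\nu'(G)<\nu(G)$ that are not $C_5$, must actually be carried out. Since the result is available as \cite[Theorem 11]{Tr}, citing it (as the paper does) is the appropriate course.
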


\subsection{Cover Ideals} Let $G = (V,E)$ be a graph on the vertex set $V = \{1,\ldots,r\}$. A {\it vertex cover} of $G$ is a subset of $V$ which meets every edge of $G$; a vertex cover is {\it minimal} if none of its proper subsets is itself a cover. The {\it cover ideal} of $G$ is defined by 
$$J(G) := (\x_{\tau} \mid \tau \text{ is a minimal vertex cover of } G).$$
It is well-known that the cover ideal $J(G)$ has the primary decomposition
\begin{equation} \label{intersect}
J(G) = \bigcap_{\{u,v\}\in E} (x_u, x_v).
\end{equation}
It follows that $J(G)^* = I(G)$.

From (\ref{intersect}), we have $J(G)$ is the Stanley-Reisner ideal corresponding with the simplicial complex
\begin{equation}\label{complex-cover}
\Delta(J(G)) = \left<V\setminus e \mid e\in E\right>.
\end{equation}

Lemma \ref{MTr} applies for $J(G)$ as follows.

\begin{lem} \label{uni-complex} For every $\alb=(\alpha_1,\ldots,\alpha_r)\in\N^r$ and $n\geqslant 1$, we have
$$
\Delta _{\alb}(J(G)^{(n)})=\left<V\setminus \{u,v\}\mid \{u,v\}\in E \text{ and }  \alpha_u+\alpha_v \leqslant n-1\right>.
$$
\end{lem}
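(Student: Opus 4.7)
The statement is essentially a direct specialization of Lemma \ref{MTr} to the particular Stanley--Reisner complex of the cover ideal, so my plan is to assemble the two ingredients and verify that the facets on the right side really are the facets of the degree complex.

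First I would recall from (\ref{complex-cover}) that
$$\Delta(J(G)) = \left<V\setminus e \mid e\in E\right>,$$
so every facet of $\Delta(J(G))$ has the form $V\setminus\{u,v\}$ for some edge $\{u,v\}\in E$, and in particular every facet has cardinality $r-2$. Since the facets all have the same size, no facet contains another; this will matter for the last step, because it guarantees that the facets extracted by Lemma \ref{MTr} remain pairwise incomparable.

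Next, I would apply Lemma \ref{MTr} to $J(G) = I_{\Delta(J(G))}$. The lemma identifies the facets of the degree complex as
$$\F(\Delta_{\alb}(J(G)^{(n)})) = \left\{F\in\F(\Delta(J(G))) \,\middle|\, \sum_{i\notin F}\alpha_i \leqslant n-1\right\}.$$
Writing $F = V\setminus\{u,v\}$ with $\{u,v\}\in E$, the complement of $F$ is $\{u,v\}$, so the inequality $\sum_{i\notin F}\alpha_i\leqslant n-1$ becomes exactly $\alpha_u+\alpha_v\leqslant n-1$. Hence
$$\F(\Delta_{\alb}(J(G)^{(n)})) = \left\{V\setminus\{u,v\} \mid \{u,v\}\in E \text{ and } \alpha_u+\alpha_v\leqslant n-1\right\}.$$

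Finally, since a simplicial complex is determined by its facets, I would conclude by writing
$$\Delta_{\alb}(J(G)^{(n)}) = \left<V\setminus\{u,v\} \mid \{u,v\}\in E \text{ and } \alpha_u+\alpha_v\leqslant n-1\right>,$$
which is the desired identity. There is no real obstacle here: the only point that needs a moment of thought is the observation that all facets of $\Delta(J(G))$ share the common size $r-2$, so the selected sets really are facets (not merely faces) of $\Delta_{\alb}(J(G)^{(n)})$ and therefore generate it.
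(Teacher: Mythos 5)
Your proof is correct and is exactly the argument the paper intends: Lemma \ref{uni-complex} is stated as an immediate specialization of Lemma \ref{MTr} to the complex $\Delta(J(G))=\left<V\setminus e\mid e\in E\right>$ from (\ref{complex-cover}), with the complement of $F=V\setminus\{u,v\}$ turning the condition $\sum_{i\notin F}\alpha_i\leqslant n-1$ into $\alpha_u+\alpha_v\leqslant n-1$. Your extra observation that all generators have the common cardinality $r-2$, so they are genuinely the facets, is a worthwhile point that the paper leaves implicit.
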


It is worth mentioning that the cover ideal $J(G)$ of $G$ is normally torsion-free, i.e. $J(G)^{(n)} = J(G)^n$ for all $n\geqslant 1$, if and only if $G$ is bipartite (see \cite[Theorem 5.1]{HHT1}). In particular, $\sdst(J(G)) = \dst(J(G))$ for any bipartite graph $G$.
\medskip

In sequel, we need some facts about the behavior of the symbolic depth function of $J(G)$ (see \cite[Theorems 3.2 and 3.4]{HKTT}).

\begin{lem}\label{hktt} Let $G$ be a graph. Then,
\begin{enumerate}
\item The sequence $\{\depth R/J(G)^{(n)}\}_{n\geqslant 1}$ is decreasing, i.e.
$$\depth R/J(G) \geqslant \depth R/J(G)^{(2)} \geqslant \depth R/J(G)^{(3)} \geqslant \cdots$$
\item $\depth R/J(G)^{(n)} = r - \nu_0(G)-1$ for all $n\geqslant 2\nu_0(G)-1$.
\end{enumerate}
\end{lem}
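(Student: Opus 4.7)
Both assertions concern the depth of $R/J(G)^{(n)}$, so my plan is to work via Takayama's formula (Lemma~\ref{TA}) together with the explicit facet description of $\Delta_{\alb}(J(G)^{(n)})$ from Lemma~\ref{uni-complex}. This reduces each assertion to a statement about the reduced homology groups $\h_{i-|G_{\alb}|-1}(\Delta_{\alb}(J(G)^{(n)});K)$ as $\alb$ ranges over $\Z^r$.

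For Part~(1), I would look for a \emph{degree shift} $\alb \mapsto \alb'$ that transforms the degree complex of $J(G)^{(n+1)}$ into that of $J(G)^{(n)}$. The natural first attempt is $\alb' = \alb - \mathbf{e}_j$ for a well-chosen $j$: subtracting $1$ from coordinate $j$ loosens the facet condition on every edge containing $j$ by exactly $1$, so if $j$ covers all \emph{tight} edges (those with $\alpha_u + \alpha_v = n$), then $\Delta_{\alb'}(J(G)^{(n)}) = \Delta_{\alb}(J(G)^{(n+1)})$, and the non-vanishing of reduced homology transfers directly. When no single vertex suffices, one iterates the shift along a vertex cover of the tight-edge subgraph, tracking how $|G_{\alb}|$ changes at each step so that the homology index $i-|G_{\alb}|-1$ is preserved.

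For Part~(2), the target value is $r - \nu_0(G) - 1$. The lower bound $\depth R/J(G)^{(n)} \geqslant r - \nu_0(G) - 1$ I would prove by showing $\h_{i-|G_{\alb}|-1}(\Delta_{\alb}(J(G)^{(n)});K) = 0$ for every $\alb \in \Z^r$ and every $i < r - \nu_0(G) - 1$, exploiting that a facet complex generated by sets $V\setminus\{u,v\}$ has a bound on its reduced-homology dimension controlled by the ordered matching number $\nu_0(G)$. For the upper bound, once $n \geqslant 2\nu_0(G)-1$ there is enough budget in the symbolic power to exhibit an explicit $\alb$ whose degree complex realizes a non-trivial class in dimension $r - \nu_0(G) - 2 - |G_{\alb}|$. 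Guided by a maximum ordered matching $M = \{u_iv_i\colon i=1,\ldots,\nu_0(G)\}$ with free parameter set $A$ and partner set $B$, a natural candidate is to split the budget $n-1$ asymmetrically on each matched pair (e.g.\ $\alpha_{u_i}=n-1$, $\alpha_{v_i}=0$) and to assign small negative values to coordinates outside $V(M)$.

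The main obstacle will be Part~(1): the degree-complex transformation is not coordinate-wise uniform, since subtracting $\mathbf{e}_j$ affects only edges through $j$, and simultaneously loosening \emph{every} tight edge while preserving the relevant reduced homology is delicate. An inductive argument on the tight-edge subgraph, or alternatively a short exact sequence comparing $J(G)^{(n)}$ and $J(G)^{(n+1)}$ via a suitable colon ideal $(J(G)^{(n+1)}:\x_\tau)$ for a minimal cover $\tau$, seems the most promising route. For Part~(2), the combinatorial construction of the witness $\alb$ is cleanest when $G$ has a perfect ordered matching; in general, one must exploit the defining condition $\{u_i,v_j\}\in E \Rightarrow i\leqslant j$ to verify that the resulting degree complex is neither a cone nor acyclic in the target dimension.
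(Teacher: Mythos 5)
First, a point of order: the paper does not prove this lemma at all --- it is imported verbatim from \cite{HKTT} (Theorems 3.2 and 3.4 there), so there is no internal proof to compare against. Your plan lives in the same Takayama/degree-complex framework that \cite{HKTT} and the rest of this paper use, so the framework is right; but what you have written is a programme with the hard steps left open, and the concrete constructions you do propose would fail. For Part (1), the shift $\alb\mapsto\alb-\mathbf{e}_j$ (or iterated along a vertex cover $\tau$ of the tight-edge subgraph) does not produce $\Delta_{\alb'}(J(G)^{(n)})=\Delta_{\alb}(J(G)^{(n+1)})$: a tight edge with both endpoints in $\tau$ has its sum dropped by $2$ and gains slack, a tight edge missed by $\tau$ loses its facet, and any coordinate pushed from $0$ to $-1$ enters $G_{\alb'}$, which changes both the complex and the homological index $i-|G_{\alb}|-1$ in Lemma \ref{TA}. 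You flag this as ``delicate'' but offer no resolution; that is the entire content of Part (1), not a technicality.

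For Part (2) the gaps are equally concrete. The lower bound rests on your assertion that non-vanishing of $\h_{j-1}$ of such a facet complex is ``controlled by the ordered matching number''; after Alexander duality this is precisely the statement that $\h_{j-1}(\Delta(G');K)\ne 0$ forces $j\leqslant\nu_0(G')$ for a subgraph $G'$ of $G$, which is the nontrivial combinatorial input of \cite{CV}/\cite{HKTT} (compare how Lemma \ref{comb-stab} has to route through $\reg I(G')=\nu(G')+1$ and the Cameron--Walker classification of Lemma \ref{Tr} to extract it). It cannot simply be asserted. For the upper bound, your witness $\alpha_{u_i}=n-1$, $\alpha_{v_i}=0$ does not work: for a non-matching edge $\{u_i,v_j\}$ with $i<j$ one gets $\alpha_{u_i}+\alpha_{v_j}=n-1\leqslant n-1$, and for an edge inside the partner set $B$ one gets sum $0$, so by Lemma \ref{uni-complex} the degree complex acquires facets beyond $\{V\setminus e\mid e\in M\}$ and the homology class in dimension $r-\nu_0(G)-2-|G_{\alb}|$ is no longer guaranteed (indeed Lemma \ref{comb-stab} shows the degree complex must equal $\left<V\setminus e\mid e\in M\right>$ exactly). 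A working assignment must exploit the ordering $\{u_i,v_j\}\in E\Rightarrow i\leqslant j$, e.g.\ $\alpha_{u_i}=s-i$ and $\alpha_{v_i}=s+i-2$ with $n=2s-1$, which is exactly the shape of the vector built in Lemma \ref{assignment} and in the proof of Theorem \ref{non-bipartite-theorem}; this also explains where the threshold $2\nu_0(G)-1$ comes from.
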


As a consequence we obtain.
\begin{lem}\label{sdstability} Let $G$ be a graph. Then,
$$\sdst(J(G)) = \min \{n\geqslant 1\mid \depth R/J(G)^{(n)} \leqslant  r-\nu_0(G)-1\}.$$
\end{lem}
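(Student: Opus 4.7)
The plan is to derive the equality directly from Lemma \ref{hktt} via a short two-step argument: first identify the limit value, then use monotonicity to collapse the two formulations.

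First I would observe that Lemma \ref{hktt}(2) identifies the limit explicitly as
\[
\lim_{k \to \infty} \depth R/J(G)^{(k)} = r - \nu_0(G) - 1,
\]
since the sequence is constant equal to this value for $n \geqslant 2\nu_0(G)-1$. In particular, the limit exists, so $\sdst(J(G))$ is well defined, and by definition
\[
\sdst(J(G)) = \min\bigl\{n_0 \mid \depth R/J(G)^{(n)} = r-\nu_0(G)-1 \text{ for all } n\geqslant n_0\bigr\}.
\]

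Next I would combine this with the decreasing property in Lemma \ref{hktt}(1). Since the sequence $\{\depth R/J(G)^{(n)}\}$ is monotonically non-increasing and its eventual value is $r-\nu_0(G)-1$, every term satisfies $\depth R/J(G)^{(n)} \geqslant r-\nu_0(G)-1$. Consequently, the condition $\depth R/J(G)^{(n)} \leqslant r-\nu_0(G)-1$ is equivalent to the equality $\depth R/J(G)^{(n)} = r-\nu_0(G)-1$.

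Now let $n^{\ast} = \min\{n \geqslant 1 \mid \depth R/J(G)^{(n)} \leqslant r-\nu_0(G)-1\}$. By the equivalence above, $\depth R/J(G)^{(n^{\ast})} = r-\nu_0(G)-1$. For any $n \geqslant n^{\ast}$, monotonicity gives $\depth R/J(G)^{(n)} \leqslant \depth R/J(G)^{(n^{\ast})} = r-\nu_0(G)-1$, and then the equivalence above forces $\depth R/J(G)^{(n)} = r-\nu_0(G)-1$. Hence $\sdst(J(G)) \leqslant n^{\ast}$. Conversely, $\sdst(J(G)) \geqslant n^{\ast}$ by definition of $n^{\ast}$, since at $n = \sdst(J(G))$ the depth already equals (hence is $\leqslant$) $r-\nu_0(G)-1$. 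This yields the desired equality.

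There is no real obstacle: the statement is essentially a packaging of Lemma \ref{hktt}. The only subtlety worth spelling out carefully is why $\depth R/J(G)^{(n)} \leqslant r-\nu_0(G)-1$ implies equality, and this is exactly where the decreasing property of the sequence, combined with the identification of the limit, is used.
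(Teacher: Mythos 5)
Your argument is correct and is exactly the intended one: the paper states this lemma as an immediate consequence of Lemma \ref{hktt}, and your two observations (the limit equals $r-\nu_0(G)-1$ by part (2), and monotonicity from part (1) makes the inequality $\leqslant$ equivalent to equality and persistent) are precisely the details being left implicit. Nothing is missing.
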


According to \cite{HV} we say that an ideal $I$ has {\it symbolic constant depth function} if $\sdst(I)=1$.  The following lemma gives a characterization of cover ideals which have symbolic constant depth functions in terms of regularity of edge ideals. 

\begin{lem} \label{constant-depth} Let $G$ be a graph. Then, $J(G)$ has constant depth function if and only if $\reg I(G) =\nu_0(G)+1$.
\end{lem}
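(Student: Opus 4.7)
The plan is to reduce the ``symbolic constant depth'' condition at $n=1$ to an equality $\depth R/J(G) = r-\nu_0(G)-1$, and then translate that depth into the regularity of $I(G)$ via Alexander duality.

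First I would invoke Lemma \ref{sdstability}: $\sdst(J(G))=1$ is equivalent to $\depth R/J(G) \leqslant r-\nu_0(G)-1$. On the other hand, Lemma \ref{hktt} says the sequence $\{\depth R/J(G)^{(n)}\}_{n\geqslant 1}$ is decreasing with limit $r-\nu_0(G)-1$, so the reverse inequality $\depth R/J(G) \geqslant r-\nu_0(G)-1$ always holds. Hence $\sdst(J(G))=1$ is equivalent to the exact equality $\depth R/J(G) = r-\nu_0(G)-1$.

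Next I would rewrite the left side using Auslander--Buchsbaum and Terai's duality formula. By Auslander--Buchsbaum, $\depth R/J(G) = r - \pd(R/J(G))$. By Terai's theorem applied to the squarefree monomial ideal $J(G)=I_{\Delta(J(G))}$, one has
\[
\pd(R/J(G)) = \reg(J(G)^*).
\]
Since $J(G)^* = I(G)$ (from the primary decomposition \eqref{intersect} and the definition of the Alexander dual given in the paper), this becomes $\pd(R/J(G)) = \reg(I(G))$. Therefore
\[
\depth R/J(G) = r - \reg(I(G)),
\]
and the equality $\depth R/J(G) = r-\nu_0(G)-1$ is equivalent to $\reg(I(G)) = \nu_0(G)+1$, which is exactly the claim.

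There is no real obstacle here; the only point to be careful about is citing Terai's duality formula (not stated explicitly in the preliminaries above, but standard for squarefree monomial ideals) and correctly identifying $J(G)^*$ with $I(G)$. If one prefers to avoid Terai's formula altogether, an alternative would be to compute $\depth R/J(G)$ directly via Hochster's formula on the local cohomology of $R/J(G)$ and match it against Lemma \ref{Hochster-Reg} applied to $I(G)$; but using Terai's theorem is the cleaner route.
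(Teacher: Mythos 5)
Your proof is correct and follows essentially the same route as the paper: the paper also combines Terai's duality (cited as \cite[Theorem 5.59]{MS}) with the Auslander--Buchsbaum formula to get $\depth R/J(G) = r - \reg I(G)$, and then uses Lemmas \ref{hktt} and \ref{sdstability} to identify $\sdst(J(G))=1$ with the equality $\depth R/J(G) = r-\nu_0(G)-1$. Your explicit remark that the inequality $\depth R/J(G) \geqslant r-\nu_0(G)-1$ always holds (so the characterization from Lemma \ref{sdstability} upgrades to an equality) is a slightly more careful phrasing of the same argument.
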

\begin{proof} Since $I(G) = J(G)^*$, by Lemma  \cite[Theorem 5.59]{MS} we have $\pd R/J(G) = \reg I(G)$. Together with \cite[Theorem 1.3.3]{BH}, it yields 
\begin{equation}\label{depth-reg}
\depth R/J(G) = r - \reg I(G).
\end{equation}

Now assume that $\sdst(J(G)) = 1$, so that $\depth R/J(G) = r - \nu_0(G)-1$ by Lemma \ref{hktt}. Together with (\ref{depth-reg}) we get $\reg I(G) = \nu_0(G)+1$.

Conversely, assume that $\reg I(G) = \nu_0(G)+1$. Then, $\depth R/J(G) = r - \nu_0(G)-1$ by (\ref{depth-reg}), and so $\sdst(J(G))=1$ by Lemma \ref{sdstability},  as required.\end{proof}

\section{Upper bounds}

In this section we will establish bounds for $\sdst(J(G))$. We start with a result that allows us to bound $\dst(J(G))$ in terms of induced subgraphs of $G$.

\begin{lem}\label{subgraph} Let $H$ be an induced subgraph of $G$ such that $\nu_0(H) = \nu_0(G)$. Then, $$\sdst(J(G)) \leqslant \sdst(J(H)).$$  
\end{lem}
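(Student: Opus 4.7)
The plan is to derive the lemma from the stronger module-theoretic inequality
\begin{equation}\label{keyineq-prop}
\depth R/J(G)^{(n)} \leqslant \depth S/J(H)^{(n)} + |V(G)\setminus V(H)|
\end{equation}
valid for every $n\geqslant 1$, where $S = K[x_i\mid i\in V(H)]$. Granted $(\ref{keyineq-prop})$, plugging in $n=\sdst(J(H))$ yields $\depth S/J(H)^{(n)} = |V(H)|-\nu_0(H)-1$ via Lemma \ref{hktt}, and combining this with $(\ref{keyineq-prop})$ and the hypothesis $\nu_0(H) = \nu_0(G)$ gives $\depth R/J(G)^{(n)} \leqslant r - \nu_0(G) -1$, which by Lemma \ref{sdstability} forces $\sdst(J(G)) \leqslant n = \sdst(J(H))$.

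To establish $(\ref{keyineq-prop})$, set $T = V(G)\setminus V(H)$ and $j = \depth S/J(H)^{(n)}$. Using $\Z^{V(H)}$-gradedness of local cohomology, I would pick a multidegree $\beta$ so that $H_{\ni}^j(S/J(H)^{(n)})_\beta\ne 0$, where $\ni$ is the irrelevant ideal of $S$. Lemma \ref{TA} then yields $\h_{j-|H_\beta|-1}(\Delta_\beta(J(H)^{(n)}); K)\ne 0$, where $H_\beta = \{i\in V(H)\mid \beta_i<0\}$. I would then extend $\beta$ to $\alpha\in\Z^r$ by setting $\alpha_j = -1$ for each $j\in T$, so that $G_\alpha = H_\beta\sqcup T$ and $|G_\alpha| = |H_\beta|+|T|$. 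The plan hinges on the identification
$$\Delta_\alpha(J(G)^{(n)}) \;=\; \Delta_\beta(J(H)^{(n)})$$
of simplicial complexes on $V(H)\setminus H_\beta$. Granted this, applying Lemma \ref{TA} to $R/J(G)^{(n)}$ in degree $\alpha$ produces a nonzero contribution to $H_{\mi}^{j+|T|}(R/J(G)^{(n)})$, since the homological shift equals $(j+|T|)-|G_\alpha|-1 = j-|H_\beta|-1$; this immediately gives $\depth R/J(G)^{(n)} \leqslant j+|T|$, as required.

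The technical heart of the argument---and the main obstacle---is the claimed equality of degree complexes. Since $\alpha$ has negative entries on $T$, Lemma \ref{MTr} does not apply directly; I would instead return to $(\ref{degree-complex})$ and analyze $x^\alpha$ against $J(G)^{(n)} R_{F\cup G_\alpha}$ for a candidate face $F\subseteq V(H)\setminus H_\beta$. Each $x_j$ with $j\in T$ is inverted in the localization, so every primary component $(x_u,x_v)^n$ of $J(G)^{(n)}$ with $\{u,v\}\cap T\ne\emptyset$ collapses to the unit ideal and can be discarded. What remains is controlled by edges $\{u,v\}$ with $u,v\in V(H)\setminus H_\beta$, which by the inducedness of $H$ are exactly the edges of $H$ with both endpoints outside $F\cup H_\beta$; for such edges $\alpha_u+\alpha_v = \beta_u+\beta_v$, and the resulting criterion ``$F\in\Delta_\alpha(J(G)^{(n)})$ iff some such edge satisfies $\beta_u+\beta_v\leqslant n-1$'' matches precisely the one defining $\Delta_\beta(J(H)^{(n)})$. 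Carrying out this localization bookkeeping carefully closes the argument.
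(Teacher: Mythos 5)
Your proposal is correct, and at the strategic level it coincides with the paper's proof: both reduce the lemma to the inequality $\depth R/J(G)^{(n)} \leqslant \depth S/J(H)^{(n)} + |V(G)\setminus V(H)|$, evaluate it at $n=\sdst(J(H))$ using Lemma \ref{hktt}, and conclude via Lemma \ref{sdstability}. The one substantive difference is how that inequality is obtained: the paper simply observes $J(H)=J(G)R_F\cap S$ and cites \cite[Lemma 1.3]{HKTT}, whereas you reprove it from scratch by Takayama's formula (Lemma \ref{TA}), extending a witnessing multidegree $\beta$ by $-1$ on $T=V(G)\setminus V(H)$ and identifying $\Delta_\alpha(J(G)^{(n)})$ with $\Delta_\beta(J(H)^{(n)})$. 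Your localization bookkeeping is sound: localization is flat, so it commutes with the finite intersection $J(G)^{(n)}=\bigcap_{\{u,v\}\in E(G)}(x_u,x_v)^n$, the components meeting $T$ become unit ideals, and the surviving components are exactly those of $J(H)^{(n)}$ because $H$ is an \emph{induced} subgraph (this hypothesis is genuinely needed here); the homological shift $(j+|T|)-|G_\alpha|-1=j-|H_\beta|-1$ is computed correctly. In effect you have supplied a self-contained proof of the special case of \cite[Lemma 1.3]{HKTT} that the paper outsources, at the cost of a longer argument; the paper's route is shorter but relies on the external reference.
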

\begin{proof} Assume that $V(H) = \{1,\ldots,s\}$.  Let $S = K[x_1,\ldots,x_s]$ and $n=\sdst(J(H))$. By Lemma \ref{hktt} we have
\begin{equation}\label{U1}
\depth S/ J(H)^{(n)} = s - \nu_0(H)-1 = s - \nu_0(G)-1.
\end{equation}

Let $F = \{s+1,\ldots,r\}$. Then, $J(H) = J(G)R_F \cap S$. Together with Equality (\ref{U1}) with  \cite[Lemma 1.3]{HKTT} we have 
$$\depth R/J(G)^{(n)} \leqslant |F| + \depth S/J(H)^{(n)} = r-s + s - \nu_0(G)-1 = r-\nu_0(G)-1.$$
Thus, $\sdst(J(G)) \leqslant n$ by Lemma \ref{sdstability}, as required.
\end{proof}

\begin{lem}\label{sub-alpla} Let $G$ be a graph and an integer $n \geqslant \sdst(J(G))$. Then, there is an induced subgraph $H$ of $G$ which satisfies the following conditions:
\begin{enumerate}
\item $\nu_0(H) = \nu_0(G)$.
\item $\sdst(J(H)) \leqslant  n$.
\item Assume that $V(H) = [s]$. Then, there is $\alb = (\alpha_1,\ldots,\alpha_s)\in \N^s$ such that
$$\h_{s-\nu_0(H)-2}(\Delta_{\alb}(J(H)^{(n)}); K) \ne 0.$$
\end{enumerate}
\end{lem}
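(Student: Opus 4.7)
The strategy is to locate a multidegree witnessing the drop in depth of $R/J(G)^{(n)}$ via Takayama's formula, then read off the induced subgraph $H$ from its set of negative coordinates. Since $n\geqslant \sdst(J(G))$, Lemmas \ref{hktt} and \ref{sdstability} imply $H_\mi^{r-\nu_0(G)-1}(R/J(G)^{(n)})\ne 0$, so I would choose $\alb\in\Z^r$ such that this module is nonzero in multidegree $\alb$. By Lemma \ref{TA},
$$\tilde{H}_{r-\nu_0(G)-2-|G_\alb|}(\Delta_\alb(J(G)^{(n)});K)\ne 0.$$
Define $H:=G[V\setminus G_\alb]$, set $s=|V(H)|=r-|G_\alb|$, and let $\alb'\in\N^s$ be the restriction of $\alb$ to coordinates outside $G_\alb$.

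The main step is to establish the identification $\Delta_\alb(J(G)^{(n)})=\Delta_{\alb'}(J(H)^{(n)})$ as simplicial complexes on the vertex set $V\setminus G_\alb$. This follows by unpacking the definition (\ref{degree-complex}) together with Lemma \ref{uni-complex} applied to $H$ (valid because $\alb'\in\N^s$): a face $F\subseteq V\setminus G_\alb$ lies in $\Delta_\alb(J(G)^{(n)})$ iff some edge $\{u,v\}\in E(G)$ satisfies $u,v\notin F\cup G_\alb$ and $\alpha_u+\alpha_v\leqslant n-1$, and the first condition is equivalent to $\{u,v\}\in E(H)$ together with $u,v\notin F$. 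This is the one place where care is required, since Lemma \ref{uni-complex} is stated only for nonnegative multidegrees --- that is precisely why we pass to $H$ and $\alb'$ simultaneously, so that the negative coordinates of $\alb$ are absorbed into the vertex deletion.

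With the identification above, the nonvanishing becomes
$$\tilde{H}_{s-\nu_0(G)-2}(\Delta_{\alb'}(J(H)^{(n)});K)\ne 0,$$
and applying Lemma \ref{TA} to $(H,\alb')$ with $|G_{\alb'}|=0$ yields $\depth S/J(H)^{(n)}\leqslant s-\nu_0(G)-1$, where $S=K[x_1,\ldots,x_s]$. On the other hand, Lemma \ref{hktt} implies $\depth S/J(H)^{(n)}\geqslant s-\nu_0(H)-1$, hence $\nu_0(H)\geqslant \nu_0(G)$; the reverse inequality is immediate since $H$ is an induced subgraph of $G$, giving $\nu_0(H)=\nu_0(G)$ and establishing (1). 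The depth inequality is then an equality, so $\sdst(J(H))\leqslant n$ by Lemma \ref{sdstability}, giving (2); finally (3) is exactly the nonvanishing statement rewritten using $\nu_0(H)=\nu_0(G)$.
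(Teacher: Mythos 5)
Your proposal is correct and follows essentially the same route as the paper: pick a multidegree $\alb$ witnessing $H_{\mi}^{r-\nu_0(G)-1}(R/J(G)^{(n)})_{\alb}\ne \zv$, pass to the induced subgraph on $V\setminus G_{\alb}$ with the restricted exponent vector, and identify the two degree complexes. The only difference is minor: where the paper cites a localization lemma from \cite{HKTT} to compute $\depth S/J(H)^{(n)}$ exactly, you re-apply Takayama's formula to $(H,\alb')$ to obtain the needed upper bound, which is a valid and slightly more self-contained substitute.
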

\begin{proof} By Lemma \ref{hktt} we have $\depth R/J(G)^{(n)} = r - \nu_0(G) -1$, and hence there is $\alb = (\alpha_1,\ldots,\alpha_r) \in \Z^r$ such that
$$H_{\mi}^{r - \nu_0(G) -1}(R/J(G)^{(n)})_{\alb} \ne \zv.$$

We may assume that $G_{\alb} = \{s+1,\ldots,r\}$ for $0\leqslant s\leqslant r$. By Lemma \ref{TA} we have
\begin{equation}\label{non-zero-1}
\h_{s-\nu_0(G)-2}(\Delta_{\alb}(J(G)^{(n)}); K) \ne \zv,
\end{equation}
since $r-\nu_0(G)-1 -|G_{\alb}| - 1 = s-\nu_0(G)-2$.

Let $F=G_{\alb}$ and $S = K[x_1, \ldots,x_s]$. Let $H$ be an induced subgraph of $G$ on the vertex set $[s]$. Then, $J(H) = J(G) R_F \cap S$. By \cite[Lemma 1.4]{HKTT}  we have 
\begin{align*}
\depth S/J(H)^{(n)}&=\depth R/J(G)^{(n)}-|F| = (r - \nu_0(G) -1) - (r - s) \\
&= s -\nu_0(G)-1.
\end{align*}
Together with Lemma $\ref{hktt}$ we obtain 
$$s -\nu_0(G)-1 = \depth S/J(H)^{(n)}\geqslant s -\nu_0(H)-1,$$
hence $\nu_0(G) \leqslant \nu_0(H)$. On the other hand, $\nu_0(H) \leqslant \nu_0(G)$ because $H$ is an induced subgraph of $G$, it forces $\nu_0(H) = \nu_0(G)$, and so
$$\depth S/J(H)^{(n)}= s -\nu_0(H)-1.$$
By Lemma \ref{sdstability} we have $\sdst(J(H))\leqslant n$.

Let $\alb' = (\alpha_1,\ldots,\alpha_s) \in\N^s$. By   Equation (\ref{degree-complex}) we have
$$\Delta_{\alb'}(J(H)^{(n)}) = \Delta_{\alb}(J(G)^{(n)}).$$
Together with (\ref{non-zero-1}) it yields
$$\h_{s-\nu_0(H)-2}(\Delta_{\alb'}(J(H)^{(n)}); K) \ne \zv,$$
and the proof is complete.
\end{proof}

\begin{lem}\label{comb-stab} Let $G$ be a graph with a perfect  matching $M$ and  $n$ a positive integer. Then, there is $\alb \in \N^{2s}$, where $s = |M|$, such that
$$\alpha_u + \alpha_v \leqslant n-1, \text{ for } uv \in M, \text{ and } \alpha_u + \alpha_v \geqslant n, \text{ for } uv \in E(G) \setminus M,$$
if and only if  $\nu_0(G)=s$ and $n\geqslant \sdst(J(G))$. 
\end{lem}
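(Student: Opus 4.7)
My plan is to prove the two implications separately, pooling the tools of the section: the explicit facets of $\Delta_{\alb}(J(G)^{(n)})$ from Lemma \ref{uni-complex}, Takayama's formula (Lemma \ref{TA}), Alexander duality (Lemma \ref{AlexanderDual}), the floor and stabilization statements of Lemma \ref{hktt}, the index characterization of Lemma \ref{sdstability}, the uniqueness result Lemma \ref{unique-matching}, and the existence result Lemma \ref{sub-alpla}.

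For the forward direction, assume such an $\alb$ exists. Since $M$ is a perfect matching of $G$, we have $r=|V(G)|=2s$, and the two sum conditions say precisely that $\{uv\in E(G)\mid \alpha_u+\alpha_v\leqslant n-1\}=M$. Lemma \ref{uni-complex} then identifies the facets of $\Delta_{\alb}(J(G)^{(n)})$ as the sets $V\setminus e$ for $e\in M$; its Alexander dual is the independence complex of $(V,M)$, which is the $s$-fold join $(S^0)^{*s}\cong S^{s-1}$. Lemma \ref{AlexanderDual} yields $\widetilde{H}_{s-2}(\Delta_{\alb}(J(G)^{(n)});K)\ne 0$, and then Lemma \ref{TA} (with $G_{\alb}=\emptyset$) gives $H_{\mi}^{s-1}(R/J(G)^{(n)})_{\alb}\ne 0$, so $\depth R/J(G)^{(n)}\leqslant s-1$. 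Combined with the lower bound $\depth R/J(G)^{(n)}\geqslant 2s-\nu_0(G)-1$ coming from Lemma \ref{hktt}, this forces $\nu_0(G)\geqslant s$; and $\nu_0(G)\leqslant \nu(G)\leqslant s$ (since $|V(G)|=2s$) gives equality. The depth inequality now reads $\depth R/J(G)^{(n)}\leqslant r-\nu_0(G)-1$, so Lemma \ref{sdstability} yields $n\geqslant \sdst(J(G))$.

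For the converse, I apply Lemma \ref{sub-alpla} to $G$ with our $n$. This furnishes an induced subgraph $H$ of $G$ with $\nu_0(H)=\nu_0(G)=s$, $\sdst(J(H))\leqslant n$, and some $\alb\in\N^{m}$, where $m=|V(H)|$, with $\widetilde{H}_{m-s-2}(\Delta_{\alb}(J(H)^{(n)});K)\ne 0$. The inequality $m\geqslant 2\nu_0(H)=2s$ together with $H\subseteq G$ and $|V(G)|=2s$ forces $m=2s$ and $H=G$; thus $\alb\in\N^{2s}$ satisfies $\widetilde{H}_{s-2}(\Delta_{\alb}(J(G)^{(n)});K)\ne 0$. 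By Alexander duality this amounts to $\widetilde{H}_{s-1}(\mathrm{Ind}(V,E');K)\ne 0$, where $E':=\{uv\in E(G)\mid \alpha_u+\alpha_v\leqslant n-1\}$, and the proof is complete once I verify $E'=M$.

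This last check is the main obstacle. An isolated vertex in $(V,E')$ would turn $\mathrm{Ind}(V,E')$ into a cone with vanishing reduced homology, so $E'$ must cover every vertex. Because $\nu_0(G)=s=|V(G)|/2$ makes $G$ carry a perfect ordered matching, Lemma \ref{unique-matching} tells us $M$ is the unique perfect matching of $G$. The plan is then to show that $(V,E')$ itself admits a perfect matching (forcing $M\subseteq E'$ by uniqueness) and that $E'\subseteq M$ by checking that adjoining any edge $e\in E(G)\setminus M$ to $(V,M)$ deletes the open star of $e$ from $\mathrm{Ind}(V,M)\simeq S^{s-1}$ and collapses the remainder to a point, killing $\widetilde{H}_{s-1}$. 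Extracting a perfect matching from the mere non-vanishing of $\widetilde{H}_{s-1}(\mathrm{Ind}(V,E'))$ on $2s$ vertices is the delicate point, and I expect it to require either a discrete-Morse collapsing argument or a structural observation that uses specifically that $E'$ is a subgraph of a graph carrying a perfect ordered matching.
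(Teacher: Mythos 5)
Your forward implication matches the paper's argument essentially verbatim (join of $s$ copies of $S^0$, Alexander duality, Takayama, the depth floor from Lemma \ref{hktt}, then Lemma \ref{sdstability}), and your reduction of the converse to the single claim $E'=M$ is also exactly how the paper proceeds. But that claim is the entire content of the converse, and you have not proved it: you explicitly leave open how to extract a perfect matching of $(V,E')$ from $\widetilde{H}_{s-1}(\Delta(E');K)\ne 0$, and your proposed route to $E'\subseteq M$ (removing open stars from $S^{s-1}$) is only sketched for adjoining a single edge and in any case presupposes $M\subseteq E'$, which is the part you could not establish. As written, the proposal is an incomplete proof with a genuine gap at its decisive step.

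The missing ingredient is the regularity machinery already set up in the paper, which converts the homological hypothesis into combinatorial structure in two moves. First, $\widetilde{H}_{s-1}(\Delta(G');K)\ne 0$ (taking $\sigma=\emptyset$ in Lemma \ref{Hochster-Reg}) gives $\reg(I(G'))\geqslant s+1$, while H\`a--Van Tuyl's bound (Lemma \ref{upperBoundRegEdge}) gives $\reg(I(G'))\leqslant \nu(G')+1\leqslant s+1$; equality forces $\nu(G')=s$, i.e.\ $G'$ has a perfect matching on the $2s$ vertices --- this is precisely the ``structural observation'' you were hoping for. Second, equality $\reg(I(G'))=\nu(G')+1$ puts $G'$ in the classified extremal case (Lemma \ref{Tr}): every component is a pentagon or a Cameron--Walker graph; pentagons are excluded by the perfect matching (odd component), so $\nu'(G')=\nu(G')=s$, and an induced matching of size $s$ covering all $2s$ vertices forces $G'$ to be a disjoint union of $s$ edges. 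Then Lemma \ref{unique-matching} gives $E(G')=M$, completing the proof. Without invoking Lemmas \ref{Hochster-Reg}, \ref{upperBoundRegEdge} and \ref{Tr} (or an equivalent substitute such as a discrete Morse argument you would have to supply in full), the converse direction does not go through.
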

\begin{proof} We may assume that $G$ has the vertex set $V = \{1, \ldots,2s\}$. We have 
$$\Delta_{\alb}(J(G)^{(n)}) =\left<V(G)\setminus e\mid e\in M\right>,$$
by Lemma \ref{MTr}. Assume that $M =\{a_1b_1, \ldots, a_sb_s\}$. Then, $(\Delta_{\alb}(J(G)^{(n)})^* = \Delta(G')$, where $G'$ is the graph that consists of disjoint edges $$a_1b_1, \ldots, a_sb_s.$$
Thus,
$$\Delta(G') = \left<\{a_1\}, \{b_1\} \right> * \cdots * \left<\{a_s\}, \{b_s\} \right>,$$
and hence $\h_{s-1}(\Delta(G'); K) \ne 0$. Together with Lemma \ref{AlexanderDual}, it yields 
$$\h_{s-2}(\Delta_{\alb}(J(G)^{(n)}); K) \ne 0.$$
By Lemma \ref{TA}, we obtain $H_{\mi}^{s-1}(R/J(G)^{(n)})_{\alb} \ne \zv$, and therefore 
$$\depth R/J(G)^{(n)} \leqslant s-1.$$
On the other hand, by Lemma \ref{hktt} we have
$$\depth R/J(G)^{(n)} \geqslant 2s-\nu_0(G)-1,$$
so that $s-1\geqslant 2s-\nu_0(G)-1$. Thus, $\nu_0(G) \geqslant s$. Since $s=\nu(G)$, we have $\nu_0(G) \leqslant s$. It implies that $\nu_0(G) = s$. Therefore,
$$\depth R/J(G)^{(n)} \leqslant s-1 = 2s- \nu_0(G)-1,$$
and so $n\geqslant \sdst(J(G))$ by Lemma \ref{sdstability}.

\medskip

Conversely,  assume that $\nu_0(G) = s$ and $n\geqslant \sdst(J(G))$.  By Lemma \ref{sub-alpla}, we deduce that there is $\alb = (\alpha_1,\ldots,\alpha_{2s})\in \N^{2s}$ such that
\begin{equation}\label{F01}
\h_{2s-\nu_0(G)-2}(\Delta_{\alb}(J(G)^{(n)}), K) \ne \zv.
\end{equation}

Let $G'$ be a subgraph of $G$ consists of all edges $\{i,j\}$ of $G$ such that $\alpha_i+\alpha_j \leqslant n-1$. By Lemma $\ref{uni-complex}$ we have $\Delta_{\alb}(J(G)^{(n)}) = \left<V(G)\setminus \{i,j\} \mid \{i,j\} \in E(G')\right>$.  It follows that
$$\Delta(G') = (\Delta_{\alb}(J(G)^{(n)}))^*.$$
By Lemma \ref{AlexanderDual} we have $\h_{s-1}(\Delta(G'); K) \cong \h_{s-2}(\Delta_{\alb}(J(G)^{(n)}; K)$. Note that $2s-\nu_0(G)-2=s-2$, together with $(\ref{F01})$ we get $\h_{s-1}(\Delta(G'); K)\ne \zv$.

Hence,  $\reg(I(G'))\geqslant s + 1$ by Lemma \ref{Hochster-Reg}. Together with Lemma \ref{upperBoundRegEdge}, it forces $\reg(I(G')) = s+1$. This also implies that $s=\nu(G')$, and hence $G'$ has a perfect matching.

On the other hand, every connected component of $G'$ is  either a pentagon or a Cameron-Walker graph by Lemma \ref{Tr}. The former case is impossible since $G'$ has a perfect matching. It follows that $G'$ is a Cameron-Walker graph and so $\nu(G')=\nu'(G')$. Thus, $G'$ must be disjoint edges, and thus $E(G')$ is a perfect matching in $G$. Since $G$ has unique perfect matching by Lemma \ref{unique-matching}, we deduce that $E(G') = M$ as subsets of $E(G)$. Consequently,
$$\Delta_{\alb}(J(G)^{(n)}) = \left<V \setminus e\mid e \in M\right>,$$
and the lemma follows by Lemma \ref{MTr}.
\end{proof}

The following lemma plays a key role in our work. It gives an explicit solution for Lemma \ref{comb-stab} in the case $G$ is bipartite.

\begin{lem} \label{assignment} Let $G$ be a bipartite graph with a perfect ordered matching $M$. Assume that $M = \{\{i,s+i\}\mid i =1,\ldots,s\}$, where $s = |M|$, and  the free parameter set $X=\{1,\ldots,s\}$ and the partner set $Y =\{s+i\mid i=1,\ldots,s\}$ for $M$ form a bipartition of $G$.  Let $\ell_0(M) = 2k-1$. For each $i=1,\ldots,s$, if $\ell(s+i)=2k_i-1$, let
$$\alpha_i = k_i-1 \text{ and } \alpha_{s+i} = k - k_i.$$ 
Then,
\begin{enumerate}
\item $\alpha_i + \alpha_{s+i}  = k-1$ for $i=1,\ldots,s$. 
\item If $\{i,s+j\}\in E(G)$ with $i < j$, then $\alpha_i + \alpha_{s+j} \geqslant k$.
\end{enumerate}
\end{lem}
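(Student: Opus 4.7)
The plan is to dispatch part (1) by direct computation and reduce part (2) to a length comparison between two $M$-admissible paths. Specifically, $\alpha_i+\alpha_{s+i} = (k_i-1)+(k-k_i) = k-1$, which is (1). For (2), $\alpha_i + \alpha_{s+j} = k + (k_i - k_j) - 1$, so the claim is equivalent to $k_i \geqslant k_j + 1$, i.e.\ $\ell(s+i) \geqslant \ell(s+j)+2$. I would prove this by taking a longest $M$-admissible path for $s+j$ and prepending the two-edge detour $s+i \to i \to s+j$.

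To carry this out, let $p$ be an $M$-admissible path for $s+j$ with $\length(p)=2k_j-1$. By Lemma \ref{simple-admissible}, $p$ is simple of odd length, and since $(X,Y)$ is a bipartition of $G$ with $X = A$ and $Y = B$, the admissibility condition that every edge cross $A,B$ is automatic. The path begins at $s+j \in Y$ with an edge in $M$, so it must start $s+j \to j$, and thereafter it alternates between $Y$ and $X$ with $M$-edges and non-$M$-edges. Writing
\[
p\colon s+j_1 \to j_1 \to s+j_2 \to j_2 \to \cdots \to s+j_{k_j} \to j_{k_j},
\]
with $j_1=j$, the simplicity of $p$ forces the $j_t$ to be pairwise distinct. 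The non-$M$ edges are $\{j_t, s+j_{t+1}\}$, and by the ordered matching property in Definition \ref{ordered-matching}, membership in $E(G)$ yields $j_t \leqslant j_{t+1}$; combined with distinctness, $j = j_1 < j_2 < \cdots < j_{k_j}$.

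Now I would form the extended sequence
\[
q\colon s+i \to i \to s+j \to j \to s+j_2 \to j_2 \to \cdots \to s+j_{k_j} \to j_{k_j}.
\]
The consecutive pairs are edges: $\{s+i,i\}\in M$, $\{i,s+j\}\in E(G)$ by hypothesis, and the remainder come from $p$. The path $q$ is alternating with the first and last edges in $M$. For simplicity, since $i < j \leqslant j_t$ for every $t = 1, \ldots, k_j$, the vertex $i$ is distinct from all the $j_t$, and consequently $s+i$ is distinct from all the $s+j_t$; the bipartition then separates the $X$-vertices from the $Y$-vertices, giving simplicity. Every edge crosses $(X,Y) = (A,B)$ automatically, so $q$ is an $M$-admissible path for $s+i$. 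Since $\length(q) = 2k_j + 1 = 2(k_j+1)-1$, we obtain $\ell(s+i) \geqslant 2(k_j+1)-1$, hence $k_i \geqslant k_j + 1$, which is exactly what (2) requires.

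There is no serious obstacle; the only subtlety is verifying that the prepended path remains simple, which is where the strict inequality $i < j_1 < j_2 < \cdots < j_{k_j}$ — itself a consequence of the ordered matching axiom applied to the non-$M$ edges of $p$ — does all the work. One should also remark on non-negativity of the assigned values: $k_i \geqslant 1$ since the single $M$-edge $\{s+i,i\}$ is already an $M$-admissible path for $s+i$ of length $1$, and $k_i \leqslant k$ by the definition of $\ell_0(M)$, so both $\alpha_i$ and $\alpha_{s+i}$ lie in $\N$.
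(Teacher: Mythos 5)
Your proposal is correct and follows essentially the same route as the paper: part (1) by direct computation, and part (2) by prepending the two-edge segment $s+i \to i$ to a longest $M$-admissible path for $s+j$ to conclude $k_i \geqslant k_j+1$. The extra verifications you supply (the strictly increasing indices forcing simplicity of the extended path, and the non-negativity of the $\alpha$'s) are details the paper leaves implicit, but the core argument is identical.
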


Before giving a proof of this lemma, we explain how to assign the value $\alpha_i$ for each vertex $i$ by working on the graph $G$ depicted in Figure $4$.

\begin{center}
  
\includegraphics[scale=0.5]{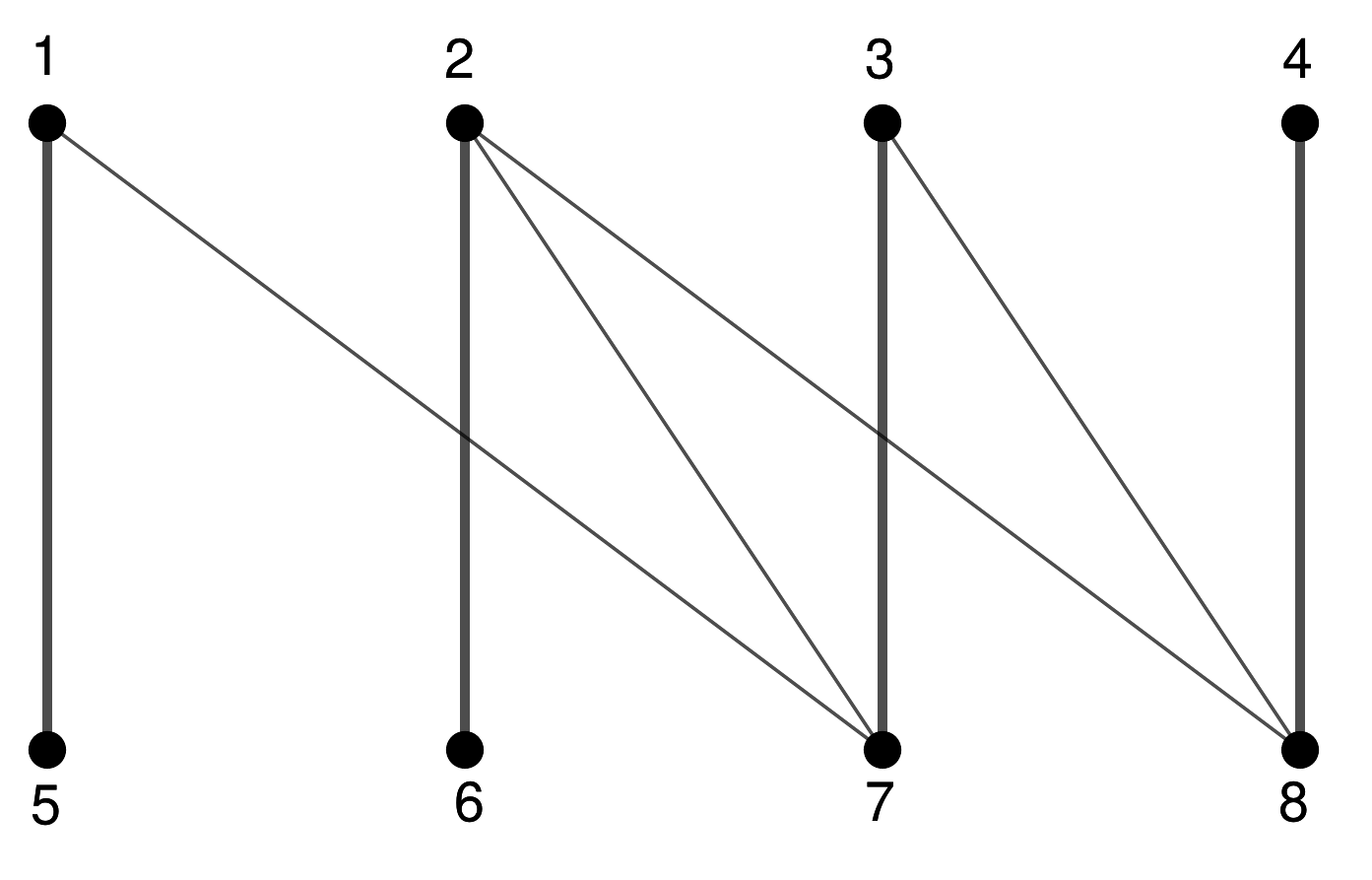}\\
\medskip

{\it Figure $3$. Assign values for vertices of a bipartite graph}
\end{center}

This graph has a perfect ordered matching  $M =\{\{1,5\}, \{2,6\}, \{3,7\}, \{4,8\}\}$. We can see that:
\begin{enumerate}
\item $5\to1\to 7 \to 3 \to 8\to 4$ is a longest $M$-admissible path of $5$, so  $k_1=3$ and $\alpha_1=2$.
\item $6\to2\to7\to3\to8\to4$ is a longest $M$-admissible path of $6$, so $k_2=3$ and $\alpha_2=2$.
\item $7\to 3 \to 8\to 4$ is a longest $M$-admissible path of $7$, so $k_3=2$ and $\alpha_3=1$.
\item $8\to 4$ is a longest $M$-admissible path of $8$, so $k_4=1$ and  $\alpha_4=0$.
\end{enumerate}
Hence, $\ell_0(M) = \max\{\ell(v) \mid v = 5,6,7,8\} = 5$, and hence $k=3$. For remain coordinates of $\alb$, we get
$$\alpha_5 = k -k_1 = 0, \ \alpha_6 = k-k_2=0,\  \alpha_7 = k-k_3 =1,\ \alpha_8 = k-k_4 = 2,$$ 
and so $\alb = (2,2,1,0,0,0,1,2)\in \N^8$.
\medskip

\begin{proof}[The proof of  Lemma \ref{assignment}]  $(1)$ is obvious, so we prove $(2)$. For an edge $\{i,s+j\}$ with $i < j$. If $p$ is an $M$-admissible path with the first vertex $s+j$ and the second vertex $j$. Then, the path $s+i, i, p$,  is an $M$-admissible path of length $\length(p) + 2$. It follows that $k_i \geqslant k_j + 1$. Thus,
$$\alpha_i + \alpha_{s+j} = (k_i-1) + (k-k_j)  = k + (k_i-k_j-1)\geqslant k,$$
as required.
\end{proof}

We are now in position to prove the  main result of the paper.

\begin{thm}\label{non-bipartite-theorem} $\sdst(J(G)) \leqslant (\ell(G)+1)/2$ for every graph $G$.
\end{thm}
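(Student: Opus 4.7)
The plan is to invoke Lemma \ref{comb-stab} after reducing to the case where $G$ has a perfect ordered matching. First I would choose a maximum ordered matching $M$ of $G$ with $\ell(M)=\ell(G)$ and pass to the induced subgraph $H := G[V(M)]$. Every $M$-alternating path in $G$ lives entirely in $V(M)$ (each vertex on it is incident to an edge of $M$), so $M$ is a perfect ordered matching in $H$ with $\ell_H(M)=\ell_G(M)$, and $\nu_0(H)=\nu_0(G)=|M|$. Lemma \ref{subgraph} then gives $\sdst(J(G))\leqslant \sdst(J(H))$, while $\ell(H)\leqslant \ell_H(M)=\ell(G)$. Hence it suffices to prove the bound for $H$, and we may replace $G$ by $H$ and assume $G$ itself carries a perfect ordered matching $M=\{a_ib_i\mid i=1,\ldots,s\}$ with free parameter set $A=\{a_1,\ldots,a_s\}$ and partner set $B=\{b_1,\ldots,b_s\}$.

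Set $n=(\ell(M)+1)/2$; by Lemma \ref{lem-ell-1} every longest $M$-alternating path has odd length, so $n$ is a positive integer. By Lemma \ref{comb-stab}, it is enough to exhibit an $\alb\in\N^{2s}$ with
\[
\alpha_u+\alpha_v\leqslant n-1\ \text{ for }\ uv\in M,\qquad \alpha_u+\alpha_v\geqslant n\ \text{ for }\ uv\in E(G)\setminus M.
\]

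To build $\alb$, I would promote the bipartite assignment of Lemma \ref{assignment} to the general setting. For each $i$ write $\ell(b_i)=2k_i-1$ (note $k_i\geqslant 1$ since $b_i\to a_i$ is $M$-admissible) and put
\[
\alpha_{a_i}=k_i-1,\qquad \alpha_{b_i}=n-k_i.
\]
Non-negativity of $\alpha_{b_i}$ follows from $\ell(b_i)\leqslant \ell_0(M)\leqslant \ell(M)=2n-1$. By construction $\alpha_{a_i}+\alpha_{b_i}=n-1$, covering the $M$-edges. For an edge $\{a_i,b_j\}\in E(G)\setminus M$ the ordered matching property forces $i<j$; prepending $b_i\to a_i\to b_j$ to a longest $M$-admissible path for $b_j$ yields an $M$-admissible path for $b_i$ of length $\ell(b_j)+2$, giving $k_i\geqslant k_j+1$ and hence $\alpha_{a_i}+\alpha_{b_j}=n+(k_i-k_j-1)\geqslant n$, exactly as in the proof of Lemma \ref{assignment}.

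The only genuinely new case is an edge $\{b_i,b_j\}\in E(G[B])$. Here I would use the defining inequality $\ell(b_i)+\ell(b_j)+1\leqslant \ell_1(M)\leqslant \ell(M)=2n-1$ from Definition \ref{ell-0} and Lemma \ref{lem-ell}, which gives $k_i+k_j\leqslant n$ and therefore $\alpha_{b_i}+\alpha_{b_j}=2n-k_i-k_j\geqslant n$. With all three cases verified, Lemma \ref{comb-stab} yields $\sdst(J(G))\leqslant n=(\ell(G)+1)/2$. The main delicate point in the argument is the clean reduction to a perfect ordered matching: one has to confirm that passing to $H=G[V(M)]$ preserves both $\nu_0$ and the length of the critical $M$-alternating path, so that the quantity $\ell(G)$ is not degraded when we replace $G$ by $H$; once this is in place, the rest is a direct extension of the bipartite construction by incorporating the $G[B]$-edge case via $\ell_1(M)$.
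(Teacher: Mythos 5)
Your proposal is correct and follows essentially the same route as the paper: reduce to $G[V(M)]$ via Lemma \ref{subgraph}, use the admissible-path lengths to build the weight vector, and conclude with Lemma \ref{comb-stab}; your direct assignment $\alpha_{b_i}=n-k_i$ is exactly the paper's $\beta_{s+i}=n-k+\alpha_{s+i}$ after its shift, and your handling of the $G[B]$-edges via $\ell_1(M)\leqslant\ell(M)$ matches the paper's Case~2. (Your parenthetical claim that every $M$-alternating path stays inside $V(M)$ is not literally true --- a path may exit through a final non-matching edge --- but only the inequality $\ell_0(M),\ell_1(M)\leqslant\ell(G)$ from Lemma \ref{lem-ell} is actually used, so this does not affect the argument.)
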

\begin{proof} Let $M$ be a maximum ordered matching in $G$ with $\ell(M) = \ell(G)$.  We may assume that $M = \{\{i,s+i\}\mid i = 1, \ldots,s\}$ where $s =\nu_0(G) = |M|$, and $A= \{1,\ldots, s\}$ and $B = \{s+1,\ldots,2s\}$ are the free parameter and the partner sets for $M$, respectively. 

Then, $M$ is a maximum perfect ordered matching in $G(A,B)$. Let $\ell_0(M) = 2k-1$. By Lemma \ref{assignment}, there is a vector $\alb = (\alpha_1,\ldots, \alpha_{2s}) \in \N^{2s}$ such that for all $j=1,\ldots,s$, we have
\begin{equation}\label{EN1}
\alpha_j + \alpha_{s+j} = k -1 \text{ for all } j \in A,
\end{equation}
and
\begin{equation}\label{EN2}
\alpha_j + \alpha_{s+e} \geqslant k \text{ for all } \{j, s+e\} \in E[G(A,B)] \text{ with } j < e.
\end{equation}

Let $n = (\ell(G)+1)/2$.  For $j=1,\ldots,s$, we define
\begin{equation}\label{EN3}
\beta_j = \alpha_j \text{ and } \beta_{s+j} = n -k +\alpha_{s+j}.
\end{equation}
Note that $\btb\in \N^{2s}$ since $n\geqslant k$ by Lemma \ref{lem-ell}.

We now prove that for every edge $uv$ of $G[M]$, the following holds
\begin{equation}\label{EN4}
\beta_u + \beta_v = n-1, \text { if } uv \in M,
\end{equation}
and 
\begin{equation}\label{EN5}
\beta_u + \beta_v \geqslant n, \text{ for } uv \notin M.
\end{equation}

Indeed, if $uv$ in $M$, then $u\in A$ and $v=s+u$. Thus, by (\ref{EN1}) and (\ref{EN3}) we obtain
$$
\beta_u + \beta_v = \alpha_u + n-k+\alpha_{s+u} = n-k+\alpha_u + \alpha_{s+u}= n-k + k-1 =n-1,
$$
and (\ref{EN4}) follows.

In order to prove $(\ref{EN5})$, let $uv$ be an edge of $G[M]$ such that $uv\notin M$. We distinguish the following cases:
\medskip

{\it Case 1}: $uv$ has an end in $A$ and another one in $B$. We may assume that $u\in A$ and $v\in B$. In this case, $v = s + e$ for some $e\in A$ with $u < e$. From $(\ref{EN2})$ and ($\ref{EN3}$) we have
\begin{align*}
\beta_u + \beta_v &= \alpha_u + (n-k+\alpha_{s+e}) = n-k+(\alpha_u + \alpha_{s+e})\\
&\geqslant n-k + k =n,
\end{align*}
and (\ref{EN5}) holds true in this case.
\medskip

{\it Case 2}: $u,v\in B$. In this case, $u = s + e$ and $v = s + f$ for $e,f \in A$. Note that $\ell_1(M)\leqslant \ell(M)$ by Lemma \ref{lem-ell}.  Together with  Lemma \ref{assignment} we have
\begin{align*}
\alpha_e +\alpha_f &= \frac{\ell(s+e)-1}{2}+ \frac{\ell(s+f)-1}{2} =\frac{\ell(s+e)+\ell(s+f)+2}{2}-2\\
&\leqslant \frac{\ell_1(M)+1}{2}-2 \leqslant  \frac{\ell(M)+1}{2}-2 = n-2,
\end{align*}
so that $\alpha_e +\alpha_f\leqslant n-2$. Together with (\ref{EN3}), we have
\begin{align*}
\beta_u + \beta_v &= \beta_{s+e} + \beta_{s +f} = (n-k +\alpha_{s+e}) +( n-k+\alpha_{s+f})\\
&= 2n -2k+\alpha_{s+e}+\alpha_{s+f} = 2n-2k+(k-1-\alpha_e)+(k-1-\alpha_f)\\
&= 2n-2-(\alpha_e+\alpha_f) \geqslant n,
\end{align*}
 and Inequality (\ref{EN5}) follows.
 
 \medskip
 
 We now prove the theorem. Together $(\ref{EN4})$ and $(\ref{EN5})$ with Lemma \ref{comb-stab} we have $\sdst(J(G[M])) \leqslant n$. Finally, by Lemma \ref{subgraph} we obtain  $$\sdst(J(G)) \leqslant \sdst(J(G[M])) \leqslant n,$$  and the proof is complete.
\end{proof}

The next result shows that the bound in Theorem \ref{non-bipartite-theorem} is the true value for $\sdst(J(G))$ in the case the graph  has a perfect ordered matching.

\begin{prop} \label{PM} If a graph $G$ has a perfect ordered matching $M$, then $$\sdst(J(G)) = (\ell(G)+1)/2.$$
\end{prop}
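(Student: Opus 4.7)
The plan is to complement Theorem \ref{non-bipartite-theorem}, which already supplies the bound $\sdst(J(G)) \leq (\ell(G)+1)/2$, with the matching lower inequality. Setting $n := \sdst(J(G))$ and observing that the perfect ordered matching $M$ is in particular a perfect matching with $|M| = s = \nu_0(G)$, Lemma \ref{comb-stab} hands me a vector $\alb \in \N^{2s}$ satisfying $\alpha_u + \alpha_v \leq n-1$ for $uv \in M$ and $\alpha_u + \alpha_v \geq n$ for $uv \in E(G)\setminus M$. The strategy is to propagate these pointwise inequalities alternately along an $M$-admissible path so that its length is converted into a lower bound on $n$; Remark \ref{perfect-ell} then lets me replace $\ell(M)$ by $\ell(G)$.

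Concretely, I would fix $v$ in the partner set $B$ and walk along an $M$-admissible path $v = v_1 \to u_1 \to v_2 \to u_2 \to \cdots \to v_k \to u_k$ of length $2k-1$. Condition (3) of Definition \ref{admissible-path} (every edge has one end in $A$, one in $B$), together with $v_1 \in B$, forces $v_i \in B$ and $u_i \in A$ throughout. Chaining $\alpha_{u_i} \leq n-1-\alpha_{v_i}$ (from the $M$-edge $v_i u_i$) with $\alpha_{v_{i+1}} \geq n - \alpha_{u_i}$ (from the non-$M$-edge $u_i v_{i+1}$) yields $\alpha_{v_{i+1}} \geq \alpha_{v_i}+1$, so by induction $\alpha_{v_k} \geq \alpha_v + (k-1)$; combining with $\alpha_{u_k} \geq 0$ and $\alpha_{u_k} \leq n-1-\alpha_{v_k}$ delivers the key inequality
\[
\alpha_v \leq n-k,
\]
valid whenever there is an $M$-admissible path of length $2k-1$ starting at $v \in B$.

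I would then use this inequality in two ways. Taking $v \in B$ realising $\ell(v) = \ell_0(M)$ and writing $\ell_0(M) = 2k-1$, the non-negativity $\alpha_v \geq 0$ forces $n \geq k = (\ell_0(M)+1)/2$. For $\ell_1$, I would pick $uv \in E(G[B])$ realising $\ell_1(M) = \ell(u)+\ell(v)+1$; since $uv \notin M$ we have $\alpha_u + \alpha_v \geq n$, and the bound above applied at both endpoints gives $n \leq (n-k_u)+(n-k_v)$ where $\ell(u) = 2k_u-1$, $\ell(v) = 2k_v-1$. Hence $k_u + k_v \leq n$ and so $n \geq (\ell_1(M)+1)/2$. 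Lemma \ref{lem-ell} and Remark \ref{perfect-ell} then collapse the two estimates into $n \geq (\ell(M)+1)/2 = (\ell(G)+1)/2$, which is the required lower bound.

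The main obstacle is conceptual rather than computational: recognising that Lemma \ref{comb-stab} is the correct bridge and that the alternating arithmetic on a single vector $\alb$ silently encodes the global combinatorics of $M$-admissible paths, so that walking the path converts local constraints on $\alb$ into a genuine length bound. Once the propagation $\alpha_{v_{i+1}} \geq \alpha_{v_i}+1$ is isolated, both the $\ell_0$ and $\ell_1$ contributions follow automatically, and the remaining work is bookkeeping on the indices $k, k_u, k_v$.
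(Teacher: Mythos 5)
Your proposal is correct and follows essentially the same route as the paper: the upper bound from Theorem \ref{non-bipartite-theorem}, then Lemma \ref{comb-stab} to produce $\alb$, a monotone propagation of the inequalities along an $M$-admissible path to bound $n$ below by $(\ell_0(M)+1)/2$ and $(\ell_1(M)+1)/2$, and finally Lemma \ref{lem-ell} with Remark \ref{perfect-ell}. The only cosmetic difference is that you phrase the chain as $\alpha_{v_{i+1}}\geqslant\alpha_{v_i}+1$ on the $B$-side (giving $\alpha_v\leqslant n-k$), while the paper records the equivalent strictly decreasing sequence on the $A$-side.
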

\begin{proof} Let $s = \nu_0(G)$ and $n = \sdst(J(G))$. Let $A$ and $B$ be the free parameter and partner sets for $M$, respectively. By Theorem \ref{non-bipartite-theorem} we have $n \leqslant (\ell(G)+1)/2$, so it remain to prove that $n \geqslant (\ell(G)+1)/2$, or equivalently $n\geqslant (\ell(M)+1)/2$ by Remark \ref{perfect-ell}.

We first claim that $n\geqslant (\ell_0(M)+1)/2$. In order to prove this, by Lemma \ref{comb-stab} there is $\alb = (\alpha_1,\ldots,\alpha_{2s})\in \N^{2s}$ such that
\begin{equation}\label{delta-6}
\alpha_u + \alpha_v \leqslant n-1 \text{ for  } uv\in M, \ \text{ and } \alpha_u + \alpha_v \geqslant n \text{ for  } uv\in E(G)\setminus M.
\end{equation}

Assume $\ell_0(M) = 2k-1$ so that $(\ell_0(M)+1)/2=k$. Let $$u_1v_1, v_1u_2,u_2v_2, \ldots, u_kv_k$$
be an $M$-admissible path of length $2k-1$ for $u_1\in B$. Then, $u_iv_i \in M$ for $i=1,\ldots, k$ and $v_iu_{i+1}\notin M$ for $i=1,\ldots,k-1$. For every $i=1,\ldots, k-1$, by (\ref{delta-6}) we have 
$$\alpha_{v_i} + \alpha_{u_{i+1}} \geqslant n, \text{ and } \alpha_{v_{i+1}}+\alpha_{u_{i+1}} \leqslant n-1,$$ 
and therefore $\alpha_{v_i} > \alpha_{v_{i+1}}$. From the sequence
$$\alpha_{v_1} > \alpha_{v_2} >\cdots > \alpha_{v_k}$$
we deduce that $\alpha_{v_1} \geqslant k-1$.

Together with the fact that $\alpha_{u_1} + \alpha_{v_1} \leqslant n-1$, it yields $k-1\leqslant n-1$, or equivalently $n\geqslant k$, as claimed.

It remains to prove $n \geqslant (\ell_1(M)+1)/2$, or equivalently $\ell_1(M)\leqslant 2n-1$. Assume that $\ell_1(M) = \ell(u)+\ell(v)+1$ for some $uv \in G[B]$. Let $e,f\in A$ such that $ue, vf\in M$. By the same argument as in  previous paragraph, we obtain $\alpha_e \geqslant (\ell(u)-1)/2$ and $\alpha_f \geqslant (\ell(v)-1)/2$. Together with (\ref{delta-6}) we have
\begin{align*}
\ell_1(M) &= \ell(u)+\ell(v)+1\leqslant (2\alpha_e+1 +2\alpha_f+1)+1\\
&=2(\alpha_e+\alpha_f) +3 \leqslant 2(n - 1-\alpha_u +n-1-\alpha_v)+3 = 2(2n - (\alpha_u +\alpha_v))-1.
\end{align*}

On the other hand, since $uv\notin M$, by (\ref{delta-6}) we have $\alpha_u + \alpha_v \geqslant n$. Therefore,
$\ell_1(M) \leqslant 2(2n - n)-1 =2n-1$, and the lemma follows.
\end{proof}

It is known that $\sdst(J(G)) \leqslant 2\nu_0(G)-1$ (see \cite[Theorem 3.4]{HKTT}). Since $\ell(G) \leqslant 4\nu_0(G)-3$ by Lemma \ref{up-nu}, thus the bound in Theorem \ref{non-bipartite-theorem} improves it. The following example shows that our bound may be much more less than the previous bound.

\begin{exam} Let $s \geqslant 1$ and let $G$ be a graph with the vertex set $V(G) =\{1,2, \ldots, 4s\}$ and the edge set (see Figure 5)
$$E(G) = E_1 \cup E_2 \cup \{\{2s+1, 3s+1\}\},$$
where
$$E_1 = \{\{i,2s+j\}, \{p,q\} \mid 1\leqslant i \leqslant j \leqslant s, \text{ and } 2s+1\leqslant p < q \leqslant 3s\},$$
and
$$E_2 = \{\{i,2s+j\}, \{p,q\} \mid s+1\leqslant i \leqslant j \leqslant 2s, \text{ and } 3s+1\leqslant p < q \leqslant 4s\}.$$
Then, we have $\nu_0(G) = 2s$, $\ell(G) = 4s-1$ and $\sdst(J(G)) = 2s$.
\begin{center}

\includegraphics[scale=0.6]{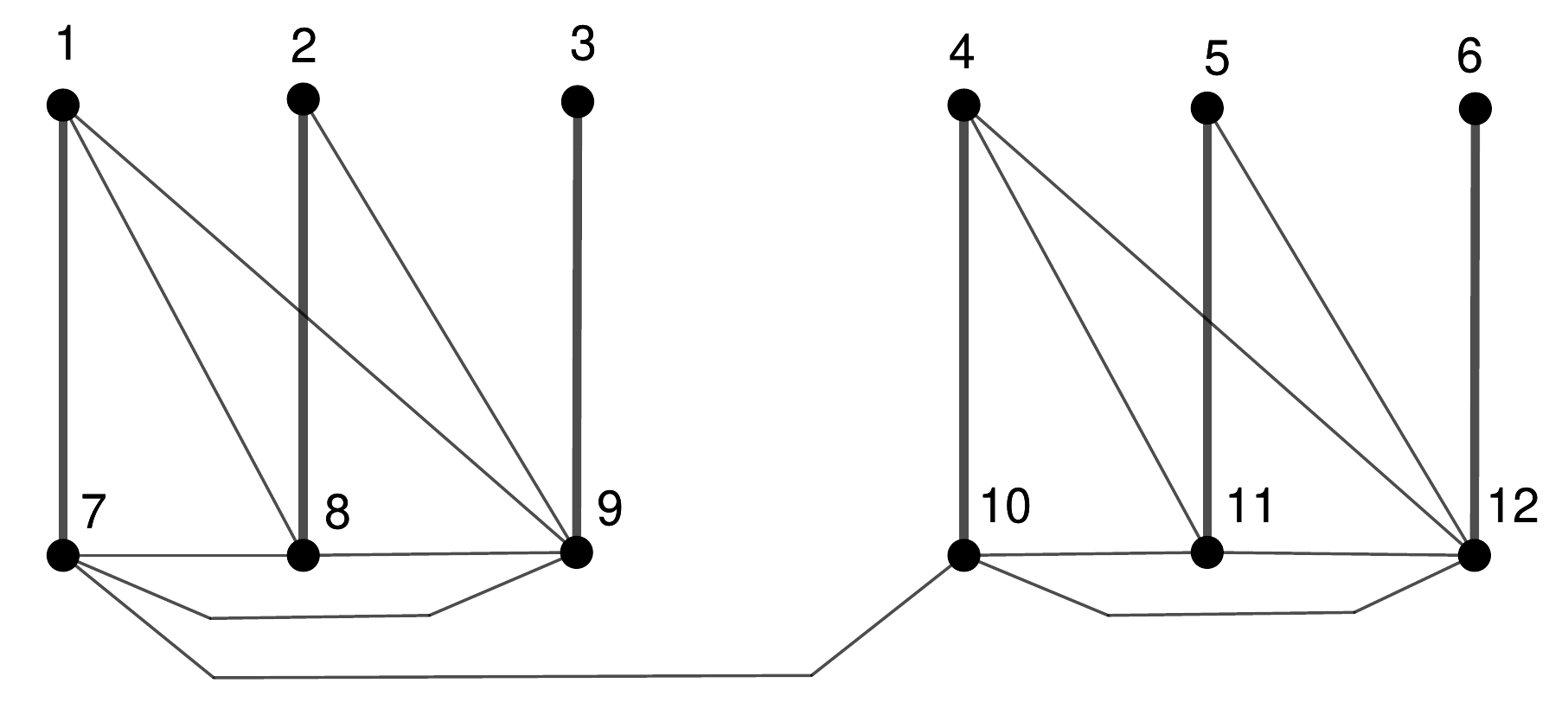}\\
\medskip

{\it Figure $4$. The graph $G$ with $s=3$}
\end{center}

\end{exam}
\begin{proof} Clearly $M = \{\{i, 2s+i\}\mid i = 1,\ldots, 2s\}$ is a perfect ordered matching in $G$. In particular, $\nu_0(G) = 2s$. 

We first claim that $\ell(G) = 4s-1$. Note that $A = \{1,\ldots,2s\}$ and $B = \{2s+1, \ldots,4s\}$ are the free parameter and the partner sets for $M$, respectively. Let 
$$A_1=\{1,\ldots, s\}, \ B_1 = \{2s+1, \ldots, 3s\}, \text{ and } M_1 = \{\{i,2s+i\}\mid j =1,\ldots,s\}$$
and 
$$A_2=\{s+1,\ldots, 2s\}, \ B_1 = \{3s+1, \ldots, 4s\}, \text{ and } M_2 = \{\{i,2s+i\}\mid j =s+1,\ldots,2s\}.$$

Then, each $M_i$ is a perfect ordered matching in $G(A_i, B_i)$ with free parameter set $A_i$ and partner set $B_i$. 

Since every $M$-admissible path in $G(A,B)$ is an $M_i$-admissible path in $G(A_i,B_i)$ for some $i=1,2$. It follows that the length of an $M$-admissible path in $G(A,B)$ is at most $2s-1$, and so $\ell_0(M) \leqslant 2s-1$.

On the other hand, observe that $$2s+1,1,2s+2,2, \ldots, 3s,s$$ is an $M$-admissible path of $2s+1$, we have $\ell(2s+1) = 2s-1$ and $\ell_0(M) = 2s-1$. By the same way, $\ell(3s+1) = 2s-1$. It follows that
$\ell_1(M)=4s-1$, and so $\ell(M) = 4s-1$. Thus, $\ell(G) = 4s-1$ by Remark \ref{perfect-ell}.

Since $G$ has a perfect ordered matching, by Proposition \ref{PM} we obtain \break $\sdst(J(G)) = (\ell(G)+1)/2 = 2s$, as required.
\end{proof}

If $G$ is bipartite, then $J(G)^n=J(G)^{(n)}$ for all $n\geqslant 1$ by \cite[Theorem 5.1]{HHT1}, so that $\sdst(J(G)) = \dst(J(G))$. Together with Theorem \ref{non-bipartite-theorem} we obtain:

\begin{cor}\label{bipartite-theorem} If $G$ is a bipartite graph, then $\dst(J(G)) \leqslant (\ell(G)+1)/2$.
\end{cor}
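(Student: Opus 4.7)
The plan is to reduce the bipartite case directly to Theorem \ref{non-bipartite-theorem} by invoking the coincidence of symbolic and ordinary powers for cover ideals of bipartite graphs. Concretely, I would argue in one short step: since $G$ is bipartite, the cover ideal $J(G)$ is normally torsion-free, i.e.\ $J(G)^{(n)} = J(G)^n$ for every $n \geqslant 1$ by \cite[Theorem 5.1]{HHT1} (this fact is already recorded in the discussion immediately following Lemma \ref{uni-complex} in the preliminaries).

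Consequently, the two depth sequences $\{\depth R/J(G)^n\}_{n\geqslant 1}$ and $\{\depth R/J(G)^{(n)}\}_{n\geqslant 1}$ are literally the same sequence. In particular their stability indices agree, so
\[
\dst(J(G)) = \sdst(J(G)).
\]
Now apply Theorem \ref{non-bipartite-theorem} to the bipartite graph $G$ to conclude
\[
\dst(J(G)) = \sdst(J(G)) \leqslant \frac{\ell(G)+1}{2},
\]
which is the desired inequality.

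There is essentially no obstacle: the proof is a one-line corollary because all the real content has already been packed into Theorem \ref{non-bipartite-theorem} (the construction of the vector $\btb$ via ordered matchings and the Takayama/Hochster machinery) and into the normally torsion-free property of bipartite cover ideals. The only thing worth emphasizing in the write-up is that the equality $\dst(J(G)) = \sdst(J(G))$ is a consequence of the bipartiteness hypothesis, so no separate argument is needed for the ordinary depth function.
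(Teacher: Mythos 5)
Your proposal is correct and is exactly the argument the paper gives: bipartiteness yields $J(G)^{(n)} = J(G)^n$ for all $n$ by \cite[Theorem 5.1]{HHT1}, hence $\dst(J(G)) = \sdst(J(G))$, and Theorem \ref{non-bipartite-theorem} finishes the proof. Nothing further is needed.
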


We conclude this section with an example taken from \cite{DK} which shows that it is hopeless to give a combinatorial formula of $\dst(J(G))$ for bipartite graph $G$, and also is $\sdst(J(G))$. Because these indices actually depend on the characteristic of the base field $K$.

\begin{exam}(see \cite[Example 4.8]{DK}) \label{char-depend} Consider the graph $G$ with the vertex set 
$$V = \{x_1,\ldots, x_{10}, y_1,\ldots,y_6\},$$ 
and the edge set
\begin{align*}
E =&\{x_1y_1,x_2y_1,x_3y_1,x_7y_1,x_9y_1,x_1y_2,x_2y_2,x_4y_2,x_6y_2,x_{10}y_2,\\
&x_1y_3,x_3y_3,x_5y_3,x_6y_3,x_8y_3,x_2y_4,x_4y_4,x_5y_4,x_7y_4,x_8y_4,\\
&x_3y_5,x_4y_5,x_5y_5,x_9y_5,x_{10}y_5,x_6y_6,x_7y_6,x_8y_6,x_9y_6,x_{10}y_6\}.
\end{align*}
Let $J=J(G)$ be the cover ideal of  $G$ in $R = K[x_1,\ldots,x_{10},y_1,\ldots,y_6]$. Then,
$$
\dst(J) = \begin{cases} 2 &\text{ if } \ch(K) = 0,\\
1 &\text{ if } \ch(K) = 2.
\end{cases}
$$

Before giving the proof, note that $G$ has a bipartition $(\{x_1,\ldots,x_{10}\}, \{y_1,\ldots,y_6\})$, so that $\nu(G) \leqslant 6$.  Hence, $\ell(G)\leqslant 2\cdot 6-1=11$ by Lemma \ref{up-nu} (2). Now in order to prove the result above, we use Macaulay2 (see \cite{GS}) to compute the initial values of the depth function $\depth R/J^k$ for $k=1,\ldots,6$, and then together with Corollary \ref{bipartite-theorem} we obtain the desired result. Namely,

\medskip

{\it Case 1}: $\ch(K) = 0$. By using Macaulay2 we get 
$$\depth(R/J) = 12 \text{ and } \depth R/J^k = 11 \text{ for } k=2,\ldots,6.$$ 
Note that $(\ell(G)+1)/2 \leqslant 6$. With this data, by Theorem \ref{bipartite-theorem} we have $\depth R/J^k = 11$, for $k\geqslant 6$. Thus, $\dst(J) = 2$.

\medskip

{\it Case 2}: $\ch(K) = 2$. By using Macaulay2 again we get $\depth R/J^k = 11$ for $k=1,\ldots,6$. By the same as in the previous case, $\depth R/J^k = 11$, for $k\geqslant 6$. Thus, $\dst(J) = 1$.
\end{exam}

\section{Cover Ideals of Forests and Cycles}

In this section we extend the formula for $\sdst(J(G))$ in Proposition \ref{PM} for some classes of graphs. Then, we  compute $\sdst(J(G))$ explicitly in the case $G$ is a path or a cycle. We start with the following  technique lemma.

\begin{lem} \label{EQM} Let $G$ be a graph with $\nu(G) = \nu_0(G)$. If  $G$ contains no pentagons,  then $\sdst(J(G)) = \sdst(J(H))$, where $H$ is an induced subgraph of $G$ with $\nu_0(H) = \nu_0(G)$ and it has a perfect ordered matching.
\end{lem}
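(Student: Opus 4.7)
The plan is to construct the desired induced subgraph $H$ as $G[V(M)]$ for a carefully chosen induced matching $M$ extracted from a homological witness for $\sdst(J(G))$. Set $n := \sdst(J(G))$ and $s := \nu_0(G) = \nu(G)$. Applying Lemma~\ref{sub-alpla} with this value of $n$, there exist an induced subgraph $H_1$ of $G$ with $\nu_0(H_1) = s$ and a vector $\alb_1 \in \N^{|V(H_1)|}$ such that
$$
\h_{|V(H_1)|-s-2}(\Delta_{\alb_1}(J(H_1)^{(n)}); K) \neq 0.
$$
This non-vanishing is the raw material from which $M$ will be manufactured.

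Next, I would mirror the ``only if'' half of the proof of Lemma~\ref{comb-stab}: form the subgraph $G_1$ of $H_1$ consisting of the low-weight edges $\{uv \in E(H_1) : \alpha_{1,u} + \alpha_{1,v} \leqslant n-1\}$. By Lemma~\ref{uni-complex}, $\Delta_{\alb_1}(J(H_1)^{(n)})$ is precisely the Alexander dual of $\Delta(G_1)$, so Lemma~\ref{AlexanderDual} transports the non-vanishing above to $\h_{s-1}(\Delta(G_1); K) \neq 0$. Hochster's formula (Lemma~\ref{Hochster-Reg}) then gives $\reg I(G_1) \geqslant s+1$, while Lemma~\ref{upperBoundRegEdge} together with the hypothesis $\nu(G)=s$ forces $\reg I(G_1) \leqslant \nu(G_1)+1 \leqslant s+1$. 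Thus $\reg I(G_1) = s+1$ and $\nu(G_1)=s$, so Lemma~\ref{Tr} applies: every connected component of $G_1$ is a pentagon or a Cameron--Walker graph. Since $G$ contains no $C_5$ and $G_1 \subseteq G$, pentagon components are excluded, so every component is Cameron--Walker and $\nu'(G_1)=\nu(G_1)=s$; pick an induced matching $M \subseteq E(G_1)$ with $|M|=s$.

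Set $H := G[V(M)]$; note $|V(H)|=2s$ and $M$ is a perfect matching of $H$. The restriction $\alb := \alb_1|_{V(M)}$ satisfies $\alpha_u+\alpha_v \leqslant n-1$ on $M$ (since $M \subseteq E(G_1)$), and $\alpha_u+\alpha_v \geqslant n$ on $E(H)\setminus M$: any such edge lies in $E(H_1)$ because $H_1$ is induced and $V(M) \subseteq V(H_1)$, and it would belong to $E(G_1)$ only if it lay in $M$, by the induced-matching property of $M$ in $G_1$. Lemma~\ref{comb-stab} therefore yields $\nu_0(H)=s$ and $\sdst(J(H)) \leqslant n$; in particular $|V(H)|=2\nu_0(H)$, so $H$ has a perfect ordered matching. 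Finally, Lemma~\ref{subgraph} applied to the induced subgraph $H \subseteq G$ supplies the reverse inequality $\sdst(J(G)) \leqslant \sdst(J(H))$, and we conclude $\sdst(J(G)) = \sdst(J(H))$. The main obstacle is the construction of this induced matching: the equality $\nu(G) = \nu_0(G)$ is what makes the regularity bound tight so that Lemma~\ref{Tr} applies, and the ``no pentagon'' hypothesis is precisely what forces every component of $G_1$ to be Cameron--Walker and hence allows $\nu'(G_1)$ to reach $s$.
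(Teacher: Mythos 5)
Your proposal is correct and follows essentially the same route as the paper's own proof: extract a homological witness via Lemma~\ref{sub-alpla}, pass to the subgraph of low-weight edges, combine Alexander duality, Hochster's formula, the bound $\reg I \leqslant \nu+1$ and Lemma~\ref{Tr} (using the no-pentagon hypothesis) to find an induced matching of size $\nu_0(G)$, and then conclude with Lemmas~\ref{comb-stab} and~\ref{subgraph}. The only cosmetic difference is that you restrict $\alb$ to $V(M)$ explicitly and omit the paper's remark that the degree complex is not a cone, neither of which affects correctness.
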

\begin{proof}  Let $n = \sdst(J(G))$. By Lemma \ref{sub-alpla}, there is an induced subgraph $H$ of $G$ such that $\nu_0(H)=\nu_0(G)$, $n \geqslant \sdst(J(H)$, and if we assume $V(H) = \{1,\ldots,s\}$ then there is $\alb \in \N^s$ such that
\begin{equation}\label{non-zero}
\h_{s-\nu_0(H)-2}(\Delta_{\alb}(J(H)^{(n)}); K) \ne \zv.
\end{equation}
Note that $n \leqslant \sdst(J(H))$ by Lemma \ref{subgraph} so  $n =\sdst(J(H))$. 

Let $G'$ be the subgraph of $H$ with the edge set determined by 
$$E(G') = \{uv \mid uv\in E(H) \text{ and  } \alpha_u + \alpha_v \leqslant n-1\}.$$
Since $\Delta_{\alb}(J(H)^{(n)}) = \{[s] \setminus e\mid e \in E(G')\}$ and this simplicial complex is not a cone due to (\ref{non-zero}), it implies that that $V(G') = V(H)$.

Since $(\Delta_{\alb}(J(H)^{(n)}))^* = \Delta(G')$, together with (\ref{non-zero}) and Lemma \ref{AlexanderDual} this fact gets
$$\h_{\nu_0(H)-1}(\Delta(G'); K) \ne \zv.$$
Therefore, $\reg I(G') \geqslant \nu_0(H) + 1$ by Lemma \ref{Hochster-Reg}.

Since $G'$ is a subgraph of $H$, $\nu(G') \leqslant \nu(H)=\nu_0(H)$. By Lemma \ref{upperBoundRegEdge} one has
$$\reg(I(G')) \leqslant \nu(G')+1\leqslant \nu(H)+1 = \nu_0(H)+1,$$
and hence $\nu(G') = \nu_0(H)$ and $\reg(I(G')) = \nu(G')+1$. 

By Lemma \ref{Tr}, every connected component of $G'$ is either a pentagon or a Cameron-Walker graph. Since $G$ contains no pentagons,  $G'$ must be a Cameron-Walker graph, and hence $\nu'(G') = \nu(G')$. Since $\nu(G') = \nu_0(H) = \nu_0(G)=\nu(G)$, it follows that  $G'$ has an induced matching, say $M$, with $|M| = \nu(G)$.

We next show that
\begin{equation}\label{LQ}
\alpha_u + \alpha_v \leqslant n-1 \text{ for  } uv\in M, \ \text{ and } \alpha_u + \alpha_v \geqslant n \text{ for  } uv\in E(G[M])\setminus M.
\end{equation}
Indeed, for any edge $uv$ of $G$, if $uv\in M$ then $uv\in E(G')$. By definition of $G'$ we have, $\alpha_u + \alpha_v \leqslant n-1$. Assume that $uv$ is not in $M$. Since $M$ is an induced matching in $G'$, the edge $uv$ has no common vertex with any edge in $M$. It implies that $uv$ is not an edge of $G'$, so $\alpha_u+\alpha_v \geqslant n$, and (\ref{LQ}) follows.

Together (\ref{LQ}) with Lemma \ref{comb-stab} we obtain $\nu_0(G[M]) = |M| = \nu_0(G)$ and $n\geqslant \sdst(J(G[M])$. On the other hand, since $G[M]$ is an induced subgraph of $G$ with $\nu_0(G[M]) =\nu_0(G)$, one has $n\leqslant \sdst(J(G[M])$ by Lemma \ref{subgraph}. Hence,
$n = \sdst(J(G[M])$, and the lemma follows.
\end{proof}

\begin{prop} \label{EPM} Let $G$ be a graph with $\nu(G) = \nu_0(G)$ such that it contains no pentagons. Then,  $\sdst(J(G))=(\ell(G)+1)/2$.
\end{prop}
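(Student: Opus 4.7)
The plan is to combine the upper bound from Theorem \ref{non-bipartite-theorem}, which immediately gives $\sdst(J(G)) \leqslant (\ell(G)+1)/2$, with a matching lower bound obtained by transporting the problem to a subgraph with a perfect ordered matching. The hypotheses on $G$ are exactly those of Lemma \ref{EQM}, so that lemma produces an induced subgraph $H$ of $G$ with $\nu_0(H) = \nu_0(G)$, a perfect ordered matching $M$ in $H$, and the equality $\sdst(J(G)) = \sdst(J(H))$. Because $H$ has a perfect ordered matching, Proposition \ref{PM} evaluates the right-hand side: $\sdst(J(H)) = (\ell(H)+1)/2$. Thus the proof reduces to establishing $\ell(H) \geqslant \ell(G)$.

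To that end, I would first observe that $M$ is not only a perfect ordered matching of $H$ but also a maximum ordered matching of $G$. Since $H$ is induced, an independent set in $H$ remains independent in $G$, and any edge $\{u_i,v_j\} \in E(G)$ with both endpoints in $V(H) = V(M)$ automatically lies in $E(H)$; hence the two defining conditions for an ordered matching transfer from $H$ to $G$. Together with $|M| = \nu_0(H) = \nu_0(G)$, this shows $M$ is maximum ordered in $G$, so by definition $\ell(G) \leqslant \ell_G(M)$.

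The main obstacle is the comparison $\ell_G(M) = \ell_H(M)$, which I would handle using the remark (drawn from the proofs of Lemmas \ref{lem-ell} and \ref{up-nu}) that every longest $M$-alternating path has both its first and last edges in $M$. Fix such a longest $M$-alternating path $p$ in $G$. Its extremal edges lie in $M \subseteq E(H)$, and since $p$ alternates between $M$ and $E(G)\setminus M$, every interior vertex of $p$ is incident to some $M$-edge appearing on $p$; consequently every vertex of $p$ belongs to $V(M) = V(H)$. The non-$M$ edges on $p$ therefore have both endpoints in $V(H)$ and, by inducedness of $H$, actually lie in $E(H)$. Hence $p$ is a path in $H$, yielding $\ell_G(M) \leqslant \ell_H(M)$; the reverse is immediate. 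By Remark \ref{perfect-ell}, $\ell_H(M) = \ell(H)$, so we conclude
\[
\ell(G) \;\leqslant\; \ell_G(M) \;=\; \ell_H(M) \;=\; \ell(H),
\]
which gives $\sdst(J(G)) = \sdst(J(H)) = (\ell(H)+1)/2 \geqslant (\ell(G)+1)/2$. Combined with the upper bound, this yields the desired equality $\sdst(J(G)) = (\ell(G)+1)/2$.
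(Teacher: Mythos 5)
Your proof is correct and follows essentially the same route as the paper: upper bound from Theorem \ref{non-bipartite-theorem}, then Lemma \ref{EQM} to pass to an induced subgraph $H$ with a perfect ordered matching and Proposition \ref{PM} to evaluate $\sdst(J(H))$. The only difference is that you carefully justify the final inequality $\ell(H)\geqslant\ell(G)$ (via $M$ being a maximum ordered matching of $G$ and $\ell_G(M)=\ell_H(M)$), a step the paper asserts without comment; your justification is sound.
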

\begin{proof} By Theorem \ref{non-bipartite-theorem}, we have $\sdst(J(G)) \leqslant (\ell(G)+1)/2$. It remains to prove that $\sdst(J(G)) \geqslant (\ell(G)+1)/2$. In order to prove this, by Lemma \ref{EQM}, there is an induced subgraph $H$ of $G$ such that $\nu_0(G)=\nu_0(H)$, $\sdst(J(G))=\sdst(J(H))$ and $H$ has a perfect ordered matching, say $M$.

By Proposition \ref{PM} we have $\sdst(J(H)) = (\ell(M)+1)/2 \geqslant (\ell(G)+1)/2$, and  proposition follows.
\end{proof}

\begin{cor}\label{bipartite-cor} If $G$ is a bipartite graph with $\nu(G) = \nu_0(G)$, then
$$\dst(J(G))  = (\ell(G)+1)/2.$$
\end{cor}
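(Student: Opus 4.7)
The plan is to observe that this corollary follows immediately by combining Proposition~\ref{EPM} with the well-known equality of ordinary and symbolic powers for cover ideals of bipartite graphs.

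First, since $G$ is bipartite, it contains no odd cycles, and in particular no pentagons (cycles of length $5$). Together with the hypothesis $\nu(G) = \nu_0(G)$, this puts $G$ in the scope of Proposition~\ref{EPM}, which yields
\begin{equation*}
\sdst(J(G)) = (\ell(G)+1)/2.
\end{equation*}

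Second, as recalled right after Lemma~\ref{uni-complex} in the preliminaries, a result of Herzog, Hibi and Trung \cite[Theorem 5.1]{HHT1} states that $J(G)$ is normally torsion-free when $G$ is bipartite, i.e.\ $J(G)^{(n)} = J(G)^n$ for all $n \geqslant 1$. Consequently $\depth R/J(G)^n = \depth R/J(G)^{(n)}$ for every $n$, which forces $\dst(J(G)) = \sdst(J(G))$.

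Combining these two equalities gives $\dst(J(G)) = (\ell(G)+1)/2$, as desired. There is no real obstacle here — the corollary is a direct specialization of Proposition~\ref{EPM} to the bipartite setting, where the symbolic and ordinary stability indices coincide.
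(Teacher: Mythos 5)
Your proof is correct and follows exactly the paper's route: bipartiteness excludes pentagons so Proposition~\ref{EPM} applies, and normal torsion-freeness of $J(G)$ for bipartite $G$ identifies $\dst$ with $\sdst$. You merely make explicit the second step, which the paper leaves implicit after having recorded it in the preliminaries.
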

\begin{proof} Since $G$ is bipartite, it contains no pentagons. The corollary follows from Proposition \ref{EPM}.
\end{proof}

\begin{prop}\label{main-cor} If $G$ is a forest, then $\dst(G)  = (\ell(G)+1)/2$.
\end{prop}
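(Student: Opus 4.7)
The plan is to reduce this proposition directly to Corollary \ref{bipartite-cor}. A forest contains no cycles and therefore no odd cycles, so every forest is bipartite. Consequently, $\dst(J(G)) = \sdst(J(G))$ via the normally torsion-free property, and the formula $(\ell(G)+1)/2$ will follow from Corollary \ref{bipartite-cor} as soon as I establish the matching identity $\nu(G) = \nu_0(G)$ for every forest $G$.

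To prove $\nu(G) = \nu_0(G)$ for forests I would argue by induction on $|V(G)|$. Since $E(G) \ne \emptyset$, $G$ has at least one leaf $u$; let $v$ denote its unique neighbor. A standard exchange argument shows that $G$ admits a maximum matching containing the edge $uv$: if some maximum matching $M$ pairs $v$ with a vertex $v' \ne u$, then $(M \setminus \{vv'\}) \cup \{uv\}$ is again a maximum matching, so we may assume $uv \in M$. Therefore $\nu(G \setminus \{u,v\}) = \nu(G) - 1$, and $G \setminus \{u,v\}$ is still a forest to which the inductive hypothesis applies, yielding an ordered matching $M' = \{u_iv_i \mid i = 1, \dots, s\}$ in $G \setminus \{u,v\}$ with $s = \nu(G) - 1$.

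I would then extend $M'$ to an ordered matching of size $\nu(G)$ in $G$ by appending $u_{s+1} := u$ and $v_{s+1} := v$. Two conditions must be verified: (a) the enlarged free-parameter set $\{u_1,\dots,u_s,u\}$ is independent in $G$, which is automatic because $u$'s only neighbor in $G$ is $v$, and $v$ lies outside the free-parameter set; and (b) the ordering rule $\{u_i, v_j\} \in E(G) \Rightarrow i \leq j$ is preserved for the new pair, which holds trivially since the only edge of $G$ incident to $u$ is $uv_{s+1}$, while any edge of the form $\{u_i, v_{s+1}\} = \{u_i, v\}$ automatically satisfies $i \leq s+1$. This gives $\nu_0(G) \geq s+1 = \nu(G)$, and combined with the obvious reverse inequality completes the proof of $\nu(G) = \nu_0(G)$.

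With the matching equality in hand, Corollary \ref{bipartite-cor} applies to the bipartite graph $G$ and immediately delivers $\dst(J(G)) = (\ell(G)+1)/2$. I do not anticipate any real obstacle: the exchange argument producing a maximum matching through a leaf edge is classical, and the ordered-matching axioms reduce to a single checkable pair because the leaf property restricts the edges incident to $u$ to just $uv$. All of the structural depth needed for this result is already absorbed into Corollary \ref{bipartite-cor} and its antecedents.
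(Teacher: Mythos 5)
Your proof is correct and follows the same route as the paper: reduce to Corollary \ref{bipartite-cor} by observing that a forest is bipartite (hence pentagon-free and normally torsion-free) and that $\nu(G)=\nu_0(G)$. The only difference is that the paper obtains $\nu(G)=\nu_0(G)$ by citing \cite[Proposition 3.10]{CV}, whereas you prove it directly by induction via the classical leaf-exchange argument; your verification is sound, since placing the leaf edge $uv$ last in the ordering means the only new constraints are edges of the form $\{u_i,v\}$ with $i\leqslant s<s+1$, and the leaf $u$ contributes no other edges. This makes your write-up self-contained at the cost of a short extra argument, but it is not a structurally different proof.
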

\begin{proof} Since every connected component of $G$ is a tree, we deduce that $\nu(G) = \nu_0(G)$ by \cite[Proposition 3.10]{CV}. Hence, the proposition  follows from Corollary \ref{bipartite-cor}.
\end{proof} 

The rest of the paper is devoted to compute explicitly the stability index of depth functions for the cover ideals of paths and cycles. We first start with paths.

\begin{exam} \label{exm1} Let $P_r$ be the path with $r$ vertices. Then,
$$
\dst(J(P_r))=
\begin{cases}
\frac{r}{2} & \text{ if } r \text{ is even},\\
\left\lceil \frac{r-1}{4}\right\rceil & \text{ if } r \text{ is odd}.
\end{cases}
$$
\end{exam}
\begin{proof} Note that $P_r$ has $r-1$ edges, and $\nu_0(P_r)=\nu(P_r) = \left\lfloor r/2\right\rfloor$.

If $r$ is even, then $P_r$ is a bipartite graph has a perfect ordered matching. Let $M$ be a perfect ordered matching in $P_r$. Since $P_r$ is an $M$-alternating path with the first and the last edges in $M$, hence $\ell(P_r) = \length(P_r) = r-1$. By Proposition \ref{main-cor} we have $\dst(J(P_r))=r/2$.

If $r$ is odd, then $\nu_0(P_r) = \nu(P_r) = (r-1)/2$. In order to compute $\ell(P_r)$, let $H$ be an induced subgraph of $P_r$ such that $\nu_0(H) = \nu_0(P_r)$ and $H$ has a perfect ordered matching. Since $\nu_0(H) = (r-1)/2$, we have $|V(H)| =r-1$, thus $H$ is obtained from $P_r$ by deleting just one vertex. Hence, $H$ is disjoint union of two disjoint paths, say $P_s$ and $P_t$. Since each $H$ has a perfect ordered matching, so are these paths. In particular, $s$ and $t$ are even, and therefore 
$$\ell(H) = \max\{\ell(P_s), \ell(P_t)\}.$$
By the previous case, we have
$$\ell(H) = \max\left\{s-1, t-1\right\} =\max\left\{s, t\right\}-1.$$
Therefore,
\begin{align*}
\ell(P_r) &= \min\left\{ \max\{s,t\}-1\mid s + t =r-1, s \text{ and  } t \text{ are even}  \right\}\\
&=2\min\{ \max\{a,b\}\mid a,b\in\N \text{ and } a + b = (r-1)/2 \} -1\\
&= 2\left\lceil \frac{r-1}{4}\right\rceil-1.
\end{align*}
By Proposition \ref{main-cor},  $\dst(J(P_r))=\left\lceil (r-1)/4\right\rceil$, and the proof is complete.
\end{proof}

Next we compute $\sdst(C_r)$ for the case $r$ is odd.

\begin{exam} \label{exm2} Let $C_r$ be an odd cycle. Then,
$$
\sdst(J(C_r))=
\begin{cases}
1 & \text{ if } r = 5.\\
\frac{r-1}{2} &\text{ otherwsie}.
\end{cases}
$$
\end{exam}
\begin{proof} Assume that $r\ne 5$. Since $r$ is odd, we have $\nu(C_r) = \nu_0(C_r)=(r-1)/2$. By Lemma \ref{EQM}, there is an induced subgraph $H$ of $C_r$ with $\nu_0(H) = \nu_0(C_r)$ such that $H$ has a perfect ordered matching and $\sdst(J(C_r)) = \sdst(J(H))$.

Since $H$ is an induced subgraph of $C_r$ with $r-1$ vertices, it is obtained by deleting one vertex from $C_r$, and thus $H$ is a path with $r-1$ vertices. By Example \ref{exm1} we get $\sdst(J(C_r)) = \sdst(J(H)) = (r-1)/2$.

In the case $r=5$, we have $\nu_0(G) = 2$. By \cite[Theorem 5.2]{BHT} we get $$\reg I(C_5) = \left\lfloor 5/3\right\rfloor + 2 = 3 = \nu_0(G)+1.$$
Thus, $\sdst(J(C_5)) = 1$, by Lemma \ref{constant-depth}.
\end{proof}

Finally we compute $\dst(J(C_r))$ for the case $r$ is even. In this case, the results show that $\dst(J(C_r)) = (\ell(C_r)+1)/2$ except for $r=8$.

\begin{exam} \label{exm3} Let $C_r$ be an even cycle. Then, $\ell(C_r) = 2\lceil (r-2)/4\rceil-1$ and
$$\dst(J(C_r)) =
\begin{cases}
1 &\text{ if } r = 8,\\
\left\lceil \dfrac{r-2}{4}\right\rceil &\text{ if } r \ne 8. 
\end{cases}
$$
\end{exam}
\begin{proof} The equality $\ell(C_r) = 2\lceil (r-2)/4\rceil-1$ is proved by the same argument as in Example \ref{exm1}. We now compute $\dst(J(C_r))$. First note that $\nu_0(C_r)  = r/2-1$, $\nu(C_r) = r/2$, and by \cite[Theorem 5.2]{BHT}, one has
\begin{equation}\label{reg-Cr}
\reg I(C_r)=\begin{cases}
\lfloor r/3\rfloor +1 &\text{ if } r \equiv 0,1 \pmod 3,\\
\lfloor r/3\rfloor +2 &\text{ if } r \equiv 2 \pmod 3.
\end{cases}
\end{equation}

Together with Lemma \ref{constant-depth}, these facts yield $\dst(J(C_r)) =1$ for $r=4,6,8$. Therefore, we may assume that $r \geqslant 10$.

Let $n = \dst(J(C_r))$. We first claim that $C_r$ has  a proper induced subgraph $H$ such that $\nu_0(H) = \nu_0(G)$ and $\dst(J(C_r)) = \dst(H)$. 

Indeed, assume on the contrary that this is not the case. Then, by Lemma \ref{sub-alpla} there is $\alb = (\alpha_1,\ldots,\alpha_r)\in\N^r$ such that
\begin{equation} \label{EQ100}
\h_{r-\nu_0(C_r)-2}(\Delta_{\alb}(J(C_r)^{(n)}); K) \ne \zv.
\end{equation}

Let $G$ be a subgraph of $C_r$ determined by the edge set 
$$E(G) = \{uv \mid uv\in E(C_r) \text{ and }  \alpha_u+\alpha_v \leqslant n-1\}.$$
Since $\Delta_{\alb}(J(C_r)^{(n)})$ is not a cone by (\ref{EQ100}), it implies that $V(G) = V(C_r)$.

Note that $\Delta(G) = (\Delta_{\alb}(J(C_r)^{(n)}))^*$, hence by Lemma \ref{AlexanderDual} and (\ref{EQ100}) we have
$$\h_{\nu_0(C_r)-1}(\Delta(G); K) \ne 0.$$
In particular, by Lemma \ref{Hochster-Reg} one has
\begin{equation} \label{EQ101}
\reg(I(G)) \geqslant \nu_0(C_r)+1.
\end{equation}\

We consider two possible cases:

\medskip

{\it Case 1}: $G=C_r$. Note that $\nu_0(G) = r/2-1$, so by (\ref{reg-Cr}) and (\ref{EQ101}) we have
$$\lfloor r/3\rfloor +k\geqslant r/2$$
where $k = 1$ if $r\equiv 0,1 \mod 3$ and $k = 2$ if $r\equiv 2\mod 3$. But this inequality is not true as $r\geqslant 10$,  hence this case is impossible.

\medskip

{\it Case 2}: $G \ne C_r$, i.e. $G$ is the graph obtained by removing some edges from $C_r$. In particular, $G$ is a forest, so $\reg I(G) = \nu'(G)+1$ by \cite[Theorem 2.18]{Z}.  Together with (\ref{EQ101}), it yields $\nu'(G)\geqslant \nu_0(C_r)$. It follows that $G$ has an induced matching $M$ with $|M| = \nu_0(C_r)$. Let $H = C_r[M]$. Observe that for each edge $uv$ of $C_r$ where $uv\notin M$, since $M$ is an induced matching in $G$, we have $uv\notin E(G)$, and hence
$\alpha_u + \alpha_v \geqslant n$. In particular,
$$\alpha_u +\alpha_v \leqslant n-1 \text{ for } uv\in M, \ \text{ and } \alpha_u+\alpha_v \geqslant n \text{ for } uv\in E(H) \setminus M.$$
Therefore, $\dst(J(H)) \leqslant n$ and $\nu_0(H) =|M|= \nu_0(C_r)$ by Lemma \ref{comb-stab}.  On the other hand, by Lemma \ref{subgraph} we obtain $n \leqslant \dst(J(H))$, and thus $n = \dst(J(H))$.  But the existence of the graph $H$ contradicts our assumption, and the claim follows.

\medskip

We now prove that $n= \lceil (r-2)/4\rceil$.  For any vertex $v$ of $C_r$,  $C_r \setminus v$ is a path with $r-1$ vertices and $\nu_0(C_r\setminus v) = (r-2)/2=\nu_0(C_r)$. Together with Lemma \ref{subgraph} and Example \ref{exm1}, it yields $n\leqslant \dst(C_r\setminus v) = \lceil (r-2)/4\rceil$.

On the other hand, by the claim above, $C_r$ has a proper induced subgraph $H$  such that $\nu_0(C_r)= \nu_0(H)$ and $n = \dst(J(H))$. Let $v$ be a vertex of $C_r$ which is not a vertex of $H$. Then, $H$ is an induced subgraph of $C_r\setminus v$. By Lemma \ref{subgraph} and Example \ref{exm1} we get
$$n = \dst(J(H)) \geqslant \dst(C_r\setminus v) = \lceil (r-2)/4\rceil.$$
It follows that $n= \lceil (r-2)/4\rceil$, and the proof is complete.
\end{proof}

\subsection*{Acknowledgment}  This work is  supported by International Centre of Research and Postgraduate Training in Mathematics (ICRTM), Institute of Mathematics, VAST under the grant number ICRTM04-2021.06.

\end{document}